\newtheorem{Lemma}{Lemma}[section]
\newtheorem{theorem}[Lemma]{Theorem}
\newtheorem{corollary}[Lemma]{Corollary}
\theoremstyle{definition}
\newtheorem{definition}[Lemma]{Definition}
\theoremstyle{remark}
\newtheorem{example}[Lemma]{Example}
\newtheorem{remark}[Lemma]{Remark}
\newtheorem{problem}[Lemma]{Problem}
\numberwithin{equation}{section}
\begin{document}
\title{Density by moduli and Wijsman statistical convergence}

\author{Vinod K. Bhardwaj, Shweta Dhawan \and and Oleksiy A. Dovgoshey}

\date{}
\maketitle
\begin{abstract}
In this paper, we generalized the Wijsman statistical convergence of closed sets in metric space by introducing the $f$-Wijsman statistical convergence these of sets, where $f$ is an unbounded modulus. It is shown that the Wijsman convergent sequences are precisely those sequences which are $f$-Wijsman statistically convergent for every unbounded modulus $f$. We also introduced a new concept of Wijsman strong Ces\`{a}ro summability with respect to a modulus, and investigate the relationships between the $f$-Wijsman statistically convergent sequences and the Wijsman strongly Ces\`{a}ro summable sequences with respect to $f$.
\end{abstract}

\noindent\textbf{Keywords and phrases:} modulus function; natural density; statistical convergence; strong Ces\`{a}ro summability; Wijsman convergence.
\medskip

\noindent\textbf{2010 Mathematics subject classification:} 40A35; 46A45; 40G15

\section{Introduction and background}
The idea of statistical convergence was first introduced by Fast \cite{hf51} and Steinhaus \cite{hs51} independently in the same year 1951 and since then several generalizations and applications of this concept have been investigated by various authors, namely $\check{S}a$l$\Acute{a}$t \cite{ts80}, Fridy \cite{jf85}, Connor \cite{jc88}, Aizpuru $et~al.$ \cite{ab14}, K\"{u}\c{c}\"{u}kaslan $et~al.$ \cite{md14}, and many others.

Statistical convergence depends on the natural density of subsets of the set $\mathbb{N} = \{1,2,3,\ldots\}$. The natural density $d(K)$ of set $K \subseteq \mathbb{N}$ (see \cite[Chapter~11]{iz80}) is defined by
\begin{equation}\label{eq1.1}
d(K) = \lim_{n \to \infty}\frac{1}{n}\left|\{k\leq n\colon k \in K\}\right|,
\end{equation}
where $\left|\{\,k\leq n: k \in K \}\right|$ denotes the number of elements of $K$ not exceeding~$n$. Obviously we have $d(K) =0$ provided that $K$ is finite.

In what follows we write $(x_k) \subset A$ if all elements of the sequence $(x_k)$ belong to $A$.

\begin{definition}
A sequence $(x_{k}) \subset \mathbb R$ is said to be statistically convergent to $l \in \mathbb R$ if, for each $\varepsilon>0$, the set $\{k \in \mathbb{N}\colon |x_k - l| \geq \varepsilon\}$ has the zero natural density.
\end{definition}

A new concept of density by moduli was introduced by Aizpuru $et~al.$\cite{ab14} that enabled them to obtain a nonmatrix method of convergence, namely, the $f$-statistical convergence which is a generalization of statistical convergence.

We recall that a modulus is a function $f\colon [0, \infty) \to [ 0, \infty)$ such that
\begin{enumerate}
\item[$(i)$] $f(x) = 0$ if and only if $ x = 0$,
\item [$(ii)$] $f(x + y) \leq f(x) + f(y)$ for all $x, y \in [0,\infty)$,
\item [$(iii)$] $f$ is increasing,
\item [$(iv)$] $f$ is continuous.
\end{enumerate}

The functions $f$ satisfying condition $(ii)$ are called subadditive. If $f$, $g$ are moduli and $a$, $b$ are positive real numbers, then
$$
f\circ g,\quad  af+bg, \quad\text{and}\quad f\vee g
$$
are moduli. A modulus may be unbounded or bounded. For example, the modulus $f(x) = x^p$ where $0 < p \leq 1$, is unbounded, but $g(x) = \frac{x}{(1+ x)}$ is bounded. It is interesting to note that $f\colon [0, \infty) \to [ 0, \infty)$ is a modulus if and only if there is an uniformly continuous, non-constant function $g\colon [0,\infty) \to [0,\infty)$ such that 
$$
f(t) = \sup_{\substack{|x-y|\leq t \\ x, y \in [0,\infty)}} |g(x)-g(y)|
$$
holds for every $t \in [0, \infty)$. The details can be found in Dovgoshey \emph{et al.} \cite[Theorem~4.3]{DM}. For bounded moduli this characterization has been, in fact, known Lebesgue~\cite{Le} in~$1910$.

The idea of replacing of natural density with density by moduli, has motivated us to look for some new generalizations of statistical convergence \cite{vd15, vd15a}. Using the density by moduli Bhardwaj $et~ al.$ \cite{vg15} have also introduced the concept of $f$- statistical boundedness which is a generalization of the concept of statistical boundedness \cite{jo97} and intermediate between the usual boundedness and the statistical boundedness.

The concept of convergence of sequences of points has been extended by several authors \cite{jf90, mp86, gb85, gb94, IM2015, yz93, yz94, rw64, rw66} to convergence of sequences of sets. One of such extensions considered in this paper is the concept of Wijsman convergence. Nuray and Rhoades \cite{fr12} extended the notion of Wijsman convergence of sequences of sets to that of Wijsman statistical convergence and introduced the notion of Wijsman strong Ces\`{a}ro summability of sequences of sets and discussed its relations with Wijsman statistical convergence.

In this paper we extend the Wijsman statistical convergence to a $f$-Wijsman statistical convergence, where $f$ is an unbounded modulus.

Let us recall the basic definitions of $f$-density and $f$-statistical convergence.

\begin{definition}[\cite{ab14}]\label{D:01}
Let $f\colon [0, \infty) \to [0,\infty)$ be an unbounded modulus. The $f$-density $d^f(K)$ of a set $K \subseteq \mathbb{N}$ is defined as
\begin{equation}\label{eq1.2}
d^{f} (K) := \lim_{n \to \infty} \frac{f(|\{k \leq n\colon k \in K\}|)}{f(n)}
\end{equation}
if this limit exists. A sequence $(x_k) \subset \mathbb R$ is said to be $f$-statistically convergent to $l \in \mathbb R$ if, for each $\varepsilon >0$, the set $\{k \in \mathbb N\colon |x_k-l|\geq \varepsilon\}$ has the zero $f$-density.
\end{definition}

\begin{remark}
For each unbounded modulus $f$, the finite sets have the zero $f$-density and 
$$
(d^{f}(K)=0) \Rightarrow  (d^{f}(\mathbb{N}-K) = 1)
$$
holds for every $K \subseteq \mathbb N$ but, in general, the implication
$$
(d^{f} (\mathbb{N} -K) = 1) \Rightarrow (d^{f} (K)=0)
$$
does not hold. For example if we take $f(x)=\log(1+x)$ and $K=\{2n\colon n\in \mathbb N\}$, then 
$$
d^f(K)=d^f(\mathbb N-K)=1.
$$
\end{remark}

\begin{example}\label{E:1.4}
A set having the zero natural density may have a non-zero $f$-density. In particular
$$
d(K) = 0 \quad \text{and}\quad d^{f}(K) = 1/2
$$
holds for $f(x)= \log{(1+x)}$ and $K = \{n^2\colon n \in \mathbb N\}$.
\end{example}

Now we pause to collect some definitions related to Wijsman convergence of sequences of sets in a metric space.

Let $(X,\rho)$ be a metric space with a metric $\rho$. For any $x \in X$ and any non-empty set $A \subseteq X$, the distance from $x$ to $A$ is defined by
\begin{align*}
d(x,A)= \inf_{y \in A}\rho(x,y).
\end{align*}
In what follows we denote by $CL(X)$ the set of all non-empty closed subsets of $(X, \rho)$.

\begin{definition}\label{D:07}
Let $(X,\rho)$ be a metric space, $(A_k) \subset CL(X)$ and $A \in CL(X)$. Then $(A_k)$ is said to be:
\begin{itemize}
	\item \emph{Wijsman convergent to $A$}, if the numerical sequence $(d(x,A_k))$ is convergent to $d(x,A)$ for each $x \in X$;
	\item \emph{Wijsman statistically convergent} to $A \in CL(X)$, if for each $x \in X$, the numerical sequence $(d(x,A_k))$ is statistically convergent to $d(x,A)$;
	\item \emph{Wijsman bounded} if
	\begin{equation}\label{eq1.3}
	\sup_k d(x,A_k) < \infty
	\end{equation}
	for each $x \in X$;
	\item \emph{Wijsman Ces\`{a}ro summable to $A$} if, for each $x \in X$, the sequence $(d(x,A_k))$ is Ces\`{a}ro summable to $d(x,A)$, i.e.,
	\begin{align*}
	\lim_{n \to \infty} \frac{1}{n}\sum_{k=1}^{n}d(x,A_k)=d(x,A);
	\end{align*}
	\item \emph{Wijsman strongly Ces\`{a}ro summable to $A$} if, for each $x \in X$, the sequence~$(d(x,A_k))$ is strongly Ces\`{a}ro summable to $d(x,A)$, i.e.,
	\begin{align*}
	\lim_{n \to \infty}\frac{1}{n}\sum_{k=1}^{n}|d(x,A_k)-d(x,A)|=0.
	\end{align*}
\end{itemize}
\end{definition}

\begin{remark}\label{R1.6}
	The sets $A_k$ belonging to a Wijsman bounded sequence $(A_k)$ can be unbounded subsets of $(X, \rho)$, i.e.,
	$$
	\operatorname{diam} A_k =\sup\{\rho(x,y)\colon x, y \in A_k\}=\infty.
	$$
	Moreover, the triangle inequality implies that $(A_k)$ is Wijsman bounded if there exists at least one point $p \in X$ such that~\eqref{eq1.3} holds with $x=p$.
\end{remark}

\begin{example}\label{R:03}
	Let $(X, \rho)$ be the complex plane $\mathbb C$ with the standard metric. Let us consider the sequence $(A_k)$ defined as follows:
	\[
	A_{k} :=
	\begin{cases}
	\left\{z \in \mathbb C\colon |z-1|=\frac{1}{k}\right\},& \text{if $k$ is a square,}\\
	\{0\}, &\text{otherwise}.
	\end{cases}
	\]
	This sequence is Wijsman statistically convergent to $\{0\}$ but not Wijsman convergent.
\end{example}

\begin{definition}\label{D:1}
	Let $(X,\rho)$ be a metric space, let $(A_k) \subset CL(X)$ and let $f\colon [0,\infty) \to [0,\infty)$ be an unbounded modulus. The sequence $ (A_{k})$ is said to be $f$-Wijsman statistically convergent to $A \in CL(X)$ if the sequence $(d(x,A_k))$ is $f$-statistically convergent to $d(x,A)$ for each $x \in X$. 
\end{definition}

We write 
$$
[WS^{f}]-\lim A_k = A
$$
if $(A_k)$ is $f$-Wijsman statistically convergent to $A$. In the case where $f(x) = ax$, $a >0$, the $f$-Wijsman statistical convergence reduces to the Wijsman statistical convergence. 

We prove that the Wijsman convergent sequences are precisely those sequences which are $f$-Wijsman statistically convergent for every unbounded modulus $f$. We also introduce a new concept of Wijsman strong Ces\`{a}ro summability with respect to a modulus and show that if a sequence is Wijsman strongly Ces\`{a}ro summable, then it is Wijsman strongly Ces\`{a}ro summable with respect to all moduli $f$. The moduli $f$ for which the converse is true are investigated. Finally, we study a relation between Wijsman strong Ces\`{a}ro summability with respect to a modulus $f$ and $f$-Wijsman statistical convergence.

\section{$f$-Wijsman statistical convergence}

The results of this section are closely related with paper~\cite{ab14}.

\begin{theorem}\label{T:2}
	Let $f\colon [0,\infty) \to [0,\infty)$ be an unbounded modulus, $(X,\rho)$ be a metric space, $A \in CL(X)$ and let $(A_k) \subset CL(X)$ such that
	\begin{equation}\label{T:2e1}
	[WS^f]-\lim A_k = A.
	\end{equation}
	Then $(A_k)$ is Wijsman statistically convergent to $A$.
\end{theorem}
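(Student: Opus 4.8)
The plan is to reduce the whole assertion to a single comparison between the two notions of density: \emph{for an unbounded modulus $f$, every set $K\subseteq\mathbb N$ with $d^f(K)=0$ also satisfies $d(K)=0$}. Granting this, the theorem follows by merely unwinding the definitions. Assume \eqref{T:2e1}, fix $x\in X$ and $\varepsilon>0$, and put $K=\{k\in\mathbb N\colon |d(x,A_k)-d(x,A)|\ge\varepsilon\}$. By Definition~\ref{D:1} together with Definition~\ref{D:01}, the hypothesis says exactly that $d^f(K)=0$, whence $d(K)=0$ by the comparison fact. As $x$ and $\varepsilon$ are arbitrary, the numerical sequence $(d(x,A_k))$ is statistically convergent to $d(x,A)$ for every $x\in X$, which is precisely Wijsman statistical convergence of $(A_k)$ to $A$.

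The substance of the proof is therefore the density comparison, and I would establish it by contradiction. Writing $K_n=|\{k\le n\colon k\in K\}|$, so that $0\le K_n\le n$, the hypothesis $d^f(K)=0$ reads $f(K_n)/f(n)\to 0$ and the goal is $K_n/n\to 0$. If this failed, there would exist $c>0$ and a subsequence $(n_j)$ with $K_{n_j}\ge c\,n_j$ for all $j$, and I would derive a contradiction by showing that along $(n_j)$ the ratio $f(K_{n_j})/f(n_j)$ stays bounded away from $0$.

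The key estimate exploits subadditivity. Fix an integer $m\ge 1/c$, so that $c\ge 1/m$. Splitting $n$ into $m$ equal summands and applying property $(ii)$ of a modulus yields $f(n)\le m\,f(n/m)$, that is, $f(n/m)\ge f(n)/m$. Combining this with monotonicity (property $(iii)$) and with $K_{n_j}\ge c\,n_j\ge n_j/m$ gives
$$
\frac{f(K_{n_j})}{f(n_j)}\ \ge\ \frac{f(n_j/m)}{f(n_j)}\ \ge\ \frac{1}{m}\ >\ 0,
$$
contradicting $f(K_n)/f(n)\to 0$. Hence $K_n/n\to 0$, which proves the comparison and with it the theorem. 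The only step requiring genuine care is this subadditivity bound $f(n/m)\ge f(n)/m$ and the extraction of the integer $m$ from $c$; the rest is a routine translation between the two density notions. It is worth noting that continuity of $f$ is never used, and unboundedness enters only through the meaningfulness of the limit defining $d^f$, so the real engine of the argument is monotonicity together with subadditivity.
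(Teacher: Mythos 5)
Your proof is correct and follows essentially the same route as the paper's: argue by contradiction, extract a subsequence along which the counting ratio is bounded below by $1/m$ (the paper's $1/p$), and use monotonicity together with subadditivity to transfer that lower bound to $f(|K(n)|)/f(n)$, contradicting $d^f(K)=0$. The only cosmetic difference is that you isolate the implication $d^f(K)=0\Rightarrow d(K)=0$ as a standalone comparison lemma, which the paper proves inline.
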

\begin{proof}
	For all $x \in X$, $\varepsilon>0$ and $n \in \mathbb N$ we write
	$$
	K_{x,\varepsilon}(n):=\{k\leq n\colon |d(x,A_k)-d(X,A)|\geq \varepsilon\}.
	$$
	If $(A_k)$ is not Wijsman statistically convergent to $A$, then there are $x \in X$ and $\varepsilon>0$ such that 
	$$
	\limsup_{n\to \infty} \frac{|K_{x,\varepsilon}(n)|}{n}>0.
	$$
	Hence there exist $p \in \mathbb N$ and a sequence $(n_m) \subset \mathbb N$, such that
	\begin{equation}\label{T:2e2}
	\lim_{m\to \infty} n_m = \infty
	\end{equation}
	and
	$$
	\frac{1}{n_m} |K_{x,\varepsilon}(n_m)| \geq \frac{1}{p}
	$$
	for every $m \in \mathbb N$. The last inequality is equivalent to
	\begin{equation}\label{T:2e3}
	n_m \leq p\, |K_{x,\varepsilon}(n_m)|.
	\end{equation}
	Using the subadditivity of $f$ and~\eqref{T:2e3} we obtain
	$$
	f(n_m) \leq p\, f(|K_{x,\varepsilon}(n_m)|).
	$$
	Consequently the inequality
	\begin{equation}\label{T:2e4}
	\frac{f(|K_{x,\varepsilon}(n_m)|)}{f(n_m)} \geq \frac{1}{p}
	\end{equation}
	holds for every $m \in \mathbb N$. Equality~\eqref{T:2e2} and inequality~\eqref{T:2e4} imply 
	$$
	\limsup_{n\to \infty} \frac{f(|K_{x,\varepsilon}(n)|)}{f(n)}\geq \frac{1}{p},
	$$
	contrary to~\eqref{T:2e1}.
\end{proof}

\begin{remark}\label{R:2.6}
	Using Example~\ref{E:1.4} it is easy to construct a Wijsman statistically convergent sequence which is not $f$-Wijsman statistically convergent with $f(x)=\log (1+x)$.
\end{remark}

\begin{theorem}\label{T:3}
	Let $(X,\rho)$ be a metric space and $f$, $g$ be unbounded moduli. Then for all $A$, $B \in CL(X)$ and every $(A_k) \subset CL(X)$ the equalities 
	\begin{equation}\label{e2.5}
	[WS^f]-\lim A_k = A \quad \text{and} \quad [WS^g]-\lim A_k = B
	\end{equation}
	imply $A=B$.
\end{theorem}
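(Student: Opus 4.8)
The plan is to reduce the statement to the uniqueness of the ordinary statistical limit, and then to the elementary fact that a closed set is determined by its distance function. The moduli $f$ and $g$ can be eliminated from the problem at the very first step: applying Theorem~\ref{T:2} to the first hypothesis in~\eqref{e2.5} shows that $(A_k)$ is Wijsman statistically convergent to $A$, and applying it to the second hypothesis shows that $(A_k)$ is Wijsman statistically convergent to $B$. Thus it suffices to prove that the Wijsman statistical limit is unique, and the specific moduli play no further role.

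Next I would fix an arbitrary $x \in X$ and consider the real sequence $y_k := d(x, A_k)$. By Definition~\ref{D:07}, $(y_k)$ is statistically convergent both to $d(x,A)$ and to $d(x,B)$. The heart of the argument is the uniqueness of the statistical limit of a real sequence, which I would establish directly. Suppose, to get a contradiction, that $d(x,A) \neq d(x,B)$, and set $\varepsilon := \tfrac{1}{2}|d(x,A) - d(x,B)| > 0$. By hypothesis both sets $\{k \colon |y_k - d(x,A)| \geq \varepsilon\}$ and $\{k \colon |y_k - d(x,B)| \geq \varepsilon\}$ have zero natural density, so their union has zero density and hence its complement has density $1$ and is therefore infinite, in particular nonempty. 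But any index $k$ in this complement would satisfy $|y_k - d(x,A)| < \varepsilon$ and $|y_k - d(x,B)| < \varepsilon$ simultaneously, which forces $|d(x,A) - d(x,B)| < 2\varepsilon = |d(x,A)-d(x,B)|$, a contradiction. Hence $d(x,A) = d(x,B)$.

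Since $x \in X$ was arbitrary, this yields $d(x,A) = d(x,B)$ for every $x \in X$. It remains to convert this equality of distance functions into the equality $A = B$; this is the only step that genuinely uses that $A$ and $B$ are closed, and I regard it as the point requiring care rather than a deep obstacle. For any nonempty closed set $C \in CL(X)$ one has $x \in C$ if and only if $d(x,C) = 0$, because $d(x,C) = 0$ means $x$ lies in the closure of $C$, which equals $C$. Consequently $A = \{x \in X \colon d(x,A) = 0\} = \{x \in X \colon d(x,B) = 0\} = B$, which completes the proof. The main subtlety to flag is precisely that closedness is indispensable here: without it, distinct sets with the same closure would share the same distance function, so the conclusion $A = B$ would fail.
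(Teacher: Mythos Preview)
Your proof is correct and follows essentially the same route as the paper: reduce via Theorem~\ref{T:2} to ordinary Wijsman statistical convergence, invoke uniqueness of the statistical limit of the real sequence $(d(x,A_k))$ to get $d(x,A)=d(x,B)$ for all $x$, and then use closedness to conclude $A=B$. The only difference is that you spell out the two black boxes the paper merely cites (uniqueness of statistical limits and that a closed set is the zero set of its distance function), which is fine.
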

\begin{proof}
	Let $(X,\rho)$ be a metric space, let $(A_k) \subset CL(X)$ and let~\eqref{e2.5} hold. By Theorem~\ref{T:2} the sequence $(A_k)$ is Wijsman statistically convergent to $A$ and to $B$. Using the uniqueness of statistical limits of numerical sequences we obtain that $d(x,A)=d(x,B)$ holds for every $x \in X$. It implies the equality $A=B$ because $A$, $B \in CL(X)$.
\end{proof}

\begin{corollary}
	Let $(X,\rho)$ be a metric space and let $(A_k) \subset CL(X)$. Then for every unbounded modulus $f\colon [0,\infty)\to [0,\infty)$, the limit
	$$
	[WS^f]-\lim A_k
	$$
	is unique if it exists.
\end{corollary}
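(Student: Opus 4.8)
The plan is to obtain this corollary as an immediate specialization of Theorem~\ref{T:3} to the case where the two moduli coincide. Concretely, suppose that both $A \in CL(X)$ and $B \in CL(X)$ satisfy
$$
[WS^f]-\lim A_k = A \quad\text{and}\quad [WS^f]-\lim A_k = B
$$
for one and the same unbounded modulus $f$. The first step is to observe that these two equalities are exactly the hypothesis~\eqref{e2.5} of Theorem~\ref{T:3} read with the choice $g := f$. Since $f$ is itself an unbounded modulus, the pair $(f,g)=(f,f)$ satisfies the assumptions of that theorem.

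Applying Theorem~\ref{T:3} then yields $A=B$ directly, which is precisely the asserted uniqueness of $[WS^f]-\lim A_k$. Thus the entire argument consists in recognizing that the single-modulus uniqueness statement is contained in the more general two-modulus statement already proved. For completeness one could, as an alternative route, bypass Theorem~\ref{T:3} and argue from Theorem~\ref{T:2}: a sequence that is $f$-Wijsman statistically convergent to both $A$ and $B$ is, by Theorem~\ref{T:2}, Wijsman statistically convergent to both, whence $d(x,A)=d(x,B)$ for every $x\in X$ by uniqueness of statistical limits of real sequences, and finally $A=B$ because $A,B\in CL(X)$ are determined by their distance functions.

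I do not anticipate any genuine obstacle here, since the result is a direct corollary rather than an independent theorem; the only point worth stating carefully is the legitimacy of setting $g=f$ in Theorem~\ref{T:3}, which is unproblematic because the hypotheses of that theorem treat $f$ and $g$ symmetrically and require nothing beyond each being an unbounded modulus. The proof can therefore be written in two or three lines once this specialization is made explicit.
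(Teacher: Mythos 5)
Your proposal is correct and matches the paper's intent exactly: the corollary is stated immediately after Theorem~\ref{T:3} with no separate argument, precisely because it is the specialization $g=f$ of that theorem, which is what you do. Your alternative route via Theorem~\ref{T:2} and uniqueness of statistical limits is just the paper's own proof of Theorem~\ref{T:3} unwound, so there is nothing genuinely different here.
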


We will say that a modulus $f\colon [0,\infty)\to [0,\infty)$ is slowly varying if the limit relation 
\begin{equation}\label{eq2.6}
\lim_{x\to \infty} \frac{f(ax)}{f(x)} =1
\end{equation}
holds for every $a>0$. (See Seneta \cite[Chapter~1]{Sen} for the properties of slowly varying functions.) It is clear that all bounded modulus are slowly varying. The function $f(x)=\log(1+x)$ is an example of unbounded slowly varying modulus.

The following lemma is a refinement of Lemma~3.4 from~\cite{ab14}.

\begin{Lemma}\label{L:1}
	Let $K$ be an infinite subset of $\mathbb N$. Then there is an unbounded, concave and slowly varying modulus $f\colon [0,\infty)\to [0,\infty)$ such that
	\begin{equation}\label{L:1e1}
	d^f(K)=1.
	\end{equation}
\end{Lemma}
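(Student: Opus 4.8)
The plan is to build $f$ explicitly, tailoring its breakpoints to the sparsity of $K$. Write $K=\{k_1<k_2<\cdots\}$ and let $K(n)=|\{k\le n\colon k\in K\}|$, so $K(n)\le n$ and $K(n)\to\infty$. First I would trim the list of required properties. Since a continuous, increasing, concave $g\colon[0,\infty)\to[0,\infty)$ with $g(0)=0$ and $g>0$ on $(0,\infty)$ is automatically subadditive (from $g(x)\ge\frac{x}{x+y}g(x+y)$ and the symmetric inequality), such a $g$ is already a modulus; so it suffices to produce one that is in addition unbounded, slowly varying, and satisfies $d^g(K)=1$. This frees me from checking subadditivity and lets me concentrate on concavity. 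Moreover, as $f$ will be increasing, $0<f(K(n))\le f(n)$ for all large $n$, so the requirement $d^f(K)=1$ is equivalent to $f(K(n))/f(n)\to1$.

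Next I would fix the breakpoints by the $K$-adapted recursion $M_0=1$, $M_{i+1}=k_{M_i+1}$. Because $k_m\ge m$ and $(k_m)$ is strictly increasing, $(M_i)$ is strictly increasing to $\infty$, and a one-line computation yields the crucial containment
\[
M_i\le j<M_{i+1}\ \Longrightarrow\ M_{i+1}\le k_{j+1}<M_{i+2}:
\]
the lower bound is $k_{j+1}\ge k_{M_i+1}=M_{i+1}$ and the upper bound is $k_{j+1}\le k_{M_{i+1}}<k_{M_{i+1}+1}=M_{i+2}$. I would then define $f$ to be piecewise linear with slope $1/M_{i+1}$ on each $[M_i,M_{i+1}]$ (and the same slope $1/M_1$ on $[0,1]$, so $f(0)=0$). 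Since $M_{i+1}$ increases, the slopes $1/M_{i+1}$ decrease, so $f$ is concave, increasing and continuous, hence a modulus by the reduction above. Writing $\epsilon_i:=1-M_i/M_{i+1}\in(0,1)$, the increment of $f$ across the $i$-th block is exactly $\epsilon_i$, so $V_i:=f(M_i)$ satisfies $V_{i+1}-V_i=\epsilon_i$. Unboundedness is then the statement $\sum_i\epsilon_i=\infty$, which follows from $\prod_i(1-\epsilon_i)=\prod_i M_i/M_{i+1}=1/M_i\to0$.

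The three remaining properties I would read off directly. For $d^f(K)=1$: if $K(n)=j$ with $j\in[M_i,M_{i+1})$, then $j\ge M_i$ and, by the containment, $n<k_{j+1}<M_{i+2}$, so
\[
1\ge\frac{f(K(n))}{f(n)}=\frac{f(j)}{f(n)}\ge\frac{V_i}{V_{i+2}}=\frac{V_i}{V_i+\epsilon_i+\epsilon_{i+1}}\ge\frac{V_i}{V_i+2}\longrightarrow1,
\]
using $V_i\to\infty$; since $j=K(n)\to\infty$ forces $i\to\infty$, this gives the claim. For slow variation I would verify $xf'(x)/f(x)\to0$: on $(M_i,M_{i+1})$ it is at most $M_{i+1}\cdot(1/M_{i+1})/V_i=1/V_i\to0$, and then $|\log f(ax)-\log f(x)|\le|\log a|\,\sup_{t\ge x}\bigl(tf'(t)/f(t)\bigr)\to0$ for every $a>0$, i.e.\ $f(ax)/f(x)\to1$. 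Concavity has already been secured by the monotone slopes.

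The main obstacle is the simultaneous demand that $f$ be \emph{unbounded} yet grow so slowly that $f(K(n))/f(n)\to1$ even when $K$ is extremely sparse (so $K(n)\ll n$), while remaining \emph{concave} and \emph{slowly varying}; these requirements pull against one another. The device that resolves all of them at once is the choice of slope $1/M_{i+1}$ on the $K$-adapted blocks: it makes the slopes automatically decreasing (concavity and slow variation for free), while the increments $\epsilon_i=1-M_i/M_{i+1}$ are bounded by $1$ yet sum to $\infty$ (flatness and unboundedness together). I expect the only genuinely delicate points to be establishing the containment property and the divergence $\sum_i\epsilon_i=\infty$; the rest is bookkeeping.
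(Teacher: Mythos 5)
Your proof is correct. It follows the same basic strategy as the paper's: both build a piecewise-affine, concave (hence subadditive) modulus whose breakpoints are adapted to $K$, and both derive $d^f(K)=1$ from the same ``two-block lag'' phenomenon --- $|K(n)|$ and $n$ always land within two consecutive blocks of each other, so $f(n)-f(|K(n)|)\le 2$ while $f\to\infty$. The implementations differ in instructive ways. The paper first extracts an auxiliary rapidly growing sequence $(n_k)$ satisfying $n_{k+1}/n_k\to\infty$, increasing gaps, $2n_k<n_{k+1}$ and $n_k<|K(n_{k+1})|$, and then normalizes $f(n_k)=k$; unboundedness is then immediate, and slow variation is obtained by showing $f(ax)\le f(x)+2$ for large $x$. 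You instead read the breakpoints directly off $K$ via the recursion $M_{i+1}=k_{M_i+1}$ and take slope $1/M_{i+1}$ on the $i$-th block, which makes concavity and the containment $M_i\le j<M_{i+1}\Rightarrow M_{i+1}\le k_{j+1}<M_{i+2}$ essentially automatic, but forces you to prove unboundedness separately (via $\sum_i(1-M_i/M_{i+1})=\infty$, correctly deduced from the vanishing telescoping product) and to verify slow variation by the logarithmic-derivative bound $tf'(t)/f(t)\le 1/V_i\to 0$; both of these extra steps are carried out correctly. Your version has the advantage of being fully explicit --- it does not require justifying the existence of the sequence $(n_k)$, which the paper asserts without detail --- while the paper's normalization $f(n_k)=k$ buys unboundedness for free.
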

\begin{proof}
	For every $n \in \mathbb N$ write
	$$
	K(n):=\{m\in K\colon m\leq n\}.
	$$
	Since $K$ is infinite, there is a sequence $(n_k) \subset \mathbb N$ such that:
	\begin{equation}\label{L:1e2}
	\lim_{k\to \infty}\frac{n_{k+1}}{n_k} = \infty
	\end{equation}
	and 
	\begin{equation}\label{L:1e3}
	n_{k+1} - n_{k} < n_{k+2} - n_{k+1}, \quad 2n_{k} < n_{k+1}
	\end{equation}
	and 
	\begin{equation}\label{L:1e4}
	n_k < \left|K(n_{k+1})\right|
	\end{equation}
	hold for every $k \in \mathbb N$.
	
	Write $n_0=0$ and define a function $f\colon [0,\infty)\to [0,\infty)$ by the rule: if $x \in [n_{k-1},n_k]$, $k \in \mathbb N$, then
	\begin{equation}\label{L:1e5}
	f(x)=\frac{x-n_{k-1}}{n_k - n_{k-1}} + k-1.
	\end{equation}
	In particular, we have 
	\begin{equation}\label{L:1e6}
	f(n_k)=k
	\end{equation}
	for every $k \in \mathbb N \cup\{0\}$. We claim that $f$ has all desirable properties.
		
	$(i)$ \emph{$f$ is unbounded modulus}. It is clear that $f(0)=0$ holds and $f$ is strictly increasing and unbounded. For subadditivity of $f$ it suffices to show that the function $\frac{f(t)}{t}$ is decreasing on $(0, \infty)$. Indeed, if $\frac{f(t)}{t}$ is decreasing, then 
	$$
	f(x+y)=x \frac{f(x+y)}{x+y} + y \frac{f(x+y)}{x+y} \leq x \frac{f(x)}{x} + y \frac{f(y)}{y} = f(x) + f(y).
	$$
	(See, for example, Timan~\cite[3.2.3]{Tim}.) The function $\frac{f(x)}{x}$ is decreasing on $(0,\infty)$ if and only if this function is decreasing on $(n_{k-1}, n_k)$ for every $k \in \mathbb N$. Using~\eqref{L:1e3} we see that the last condition trivially holds on $(n_0, n_1)$, because in this case, the right hand side in~\eqref{L:1e5} is
	$$
	\frac{x-n_0}{n_1-n_0} - (1-1) = \frac{x}{n_1}.
	$$
	Moreover, for $k \geq 2$ the restriction $f|_{(n_{k-1}, n_k)}$ is decreasing if and only if 
	\begin{equation}\label{L:1e7}
	\frac{(k-1)(n_k-n_{k-1}) - n_{k-1}}{n_k-n_{k-1}} \geq 0.
	\end{equation}
	Since, for $k \geq 2$, we have
	$$
	(k-1)(n_k-n_{k-1}) - n_{k-1} \geq n_k-2n_{k-1},
	$$
	the second inequality in~\eqref{L:1e3} implies \eqref{eq2.6}. Thus $f$ is an unbounded modulus.
		
	$(ii)$ \emph{$f$ is concave}. Since $f$ is a piecewise affine function, the one-sided derivatives of $f$ exist at all points $x \in [0, \infty)$. Using~\eqref{L:1e5} and the first inequality in~\eqref{L:1e3} we see that these derivatives are decreasing. Hence $f$ is concave. (For the proof of concavity of functions with decreasing one-sided derivatives see, for example, Artin~\cite[p.~4]{Art}.)
		
	$(iii)$ \emph{$f$ is slowly varying}. It is easy to see that~\eqref{eq2.6} holds for all $a>0$ if it holds for all $a>1$. Since $f$ is increasing, the inequality $a>1$ implies that 
	$$
	\liminf_{x\to \infty} \frac{f(ax)}{f(x)} \geq 1.
	$$
	Thus $f$ is slowly varying if and only if
	\begin{equation}\label{L:1e8}
	\limsup_{x\to \infty} \frac{f(ax)}{f(x)} \leq 1.
	\end{equation}
	Let $a > 1$ and $x >0$. Suppose that
	$$
	x \in [n_{k-1}, n_k] \text{ and } ax \in [n_{k+p}, n_{k+p+1}]
	$$
	for some $p$, $k \in \mathbb N$. It implies that
	\begin{equation}\label{L:1e9}
	a = \frac{ax}{x} \geq \frac{n_{k+p}}{n_{k}}.
	\end{equation}
	Using~\eqref{L:1e7} and~\eqref{L:1e9} we obtain
	\begin{equation}\label{L:1e10}
	(x \in [n_{k-1}, n_k]) \Rightarrow (ax \in [n_{k-1}, n_k] \text{ or } ax \in [n_{k}, n_{k+1}])
	\end{equation}
	for all sufficiently large $x$. Now it follows from~\eqref{L:1e5} and~\eqref{L:1e10} that
	\begin{equation}\label{L:1e11}
	f(ax) \leq f(x)+2.
	\end{equation}
	Since we have $\lim_{x\to \infty} f(x)=\infty$, inequality~\eqref{L:1e11} implies~\eqref{L:1e8}.
		
	$(iv)$ \emph{Equality~\eqref{L:1e1} holds}. We must prove the equality
	\begin{equation}\label{L:1e12}
	\lim_{m\to \infty} \frac{f(\left|K(m)\right|)}{f(m)} = 1.
	\end{equation}
	Let $m \in \mathbb N$ such that $m \geq n_2$. Then there is $k\geq 3$ for which
	\begin{equation}\label{L:1e13}
	n_{k-1} \leq m \leq n_k.
	\end{equation}
	The last double inequality and~\eqref{L:1e6} imply
	\begin{equation}\label{L:1e14}
	k-1 = f(n_{k-1}) \leq f(m) \leq f(n_k) =k.
	\end{equation}
	From~\eqref{L:1e13} it follows that
	\begin{equation}\label{L:1e15}
	\left|K(n_{k-1})\right| \leq \left|K(m)\right| \leq \left|K(n_k)\right|.
	\end{equation}
	Using~\eqref{L:1e4}, \eqref{L:1e15} and the inequality $|K(n_k)| \leq n_k$ we obtain
	$$
	n_{k-2} \leq \left|K(m)\right| \leq n_k,
	$$
	which implies
	\begin{equation}\label{L:1e16}
	k-2 = f(n_{k-2}) \leq \left|K(m)\right| \leq f(n_{k}) = k.
	\end{equation}
	Limit relation~\eqref{L:1e12} follows from~\eqref{L:1e14} and~\eqref{L:1e16}.
\end{proof}

\begin{example}
	The ternary Cantor function $G\colon [0,1] \to [0,1]$ leads to an interesting example of unbounded modulus which is not concave. Indeed, $G$ is subadditive (see, for example, Dobo\v{s}~\cite{Dob} and Timan~\cite[3.2.4]{Tim}) and can be characterized as the unique real-valued, continuous, increasing function $f\colon [0,1]\to \mathbb R$ satisfying the functional equations 
	$$
	f\left(\frac{x}{3}\right) = \frac{1}{2} f(x) \text{ and } f(1-x)=1-f(x)
	$$
	(see Chalice~\cite{Ch} for the proof). Now we define a sequence of functions $G_k$, such that $G_1=G$ and, for every $k \geq 2$, $\operatorname{dom}(G_k) = [0,3^{k-1}]$ and
	$$
	G_k(x) = 2G_{k-1}\left(\frac{x}{3}\right), \quad x \in [0,3^{k-1}].
	$$
	Then the extended Cantor function
	$$
	G_e\colon [0, \infty) \to [0, \infty), \quad G_e(x)=G_k(x), \text{ if } x \in [0,3^{k-1}]
	$$
	is a correctly defined, unbounded modulus which is not concave.
\end{example}

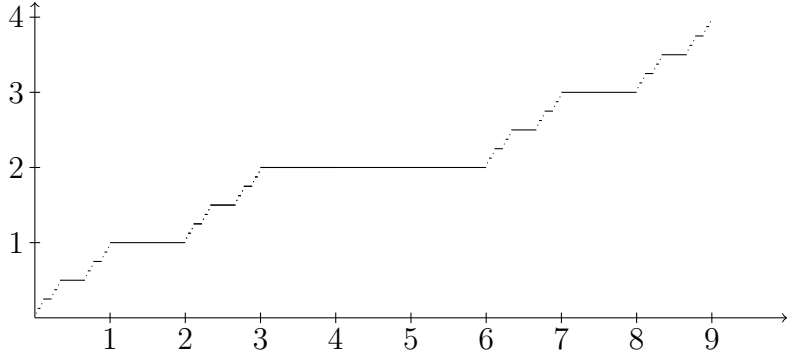
\begin{figure}[h]
	\centering
\begin{tikzpicture}[scale=1]
\draw[->] (0,0)--(10,0);
\draw[->] (0,0)--(0,4.2);
\draw (1/3, 1/2)--(2/3,1/2);
\draw (1/9, 1/4)--(2/9,1/4);
\draw (7/9, 3/4)--(8/9,3/4);
\draw (1/27, 1/8)--(2/27,1/8);
\draw (7/27, 3/8)--(8/27,3/8);
\draw (19/27, 5/8)--(20/27,5/8);
\draw (25/27, 7/8)--(26/27,7/8);
\draw (1/81, 1/16)--(2/81,1/16);
\draw (7/81, 3/16)--(8/81,3/16);
\draw (19/81, 5/16)--(20/81,5/16);
\draw (25/81, 7/16)--(26/81,7/16);
\draw (55/81, 9/16)--(56/81,9/16);
\draw (61/81, 11/16)--(62/81,11/16);
\draw (73/81, 13/16)--(74/81,13/16);
\draw (79/81, 15/16)--(80/81,15/16);
\draw (1,1) --(2,1);
\draw (2+1/3, 1+1/2)--(2+2/3,1+1/2);
\draw (2+1/9, 1+1/4)--(2+2/9,1+1/4);
\draw (2+7/9, 1+3/4)--(2+8/9,1+3/4);
\draw (2+1/27, 1+1/8)--(2+2/27,1+1/8);
\draw (2+7/27, 1+3/8)--(2+8/27,1+3/8);
\draw (2+19/27, 1+5/8)--(2+20/27,1+5/8);
\draw (2+25/27, 1+7/8)--(2+26/27,1+7/8);
\draw (2+1/81, 1+1/16)--(2+2/81,1+1/16);
\draw (2+7/81, 1+3/16)--(2+8/81,1+3/16);
\draw (2+19/81, 1+5/16)--(2+20/81,1+5/16);
\draw (2+25/81, 1+7/16)--(2+26/81,1+7/16);
\draw (2+55/81, 1+9/16)--(2+56/81,1+9/16);
\draw (2+61/81, 1+11/16)--(2+62/81,1+11/16);
\draw (2+73/81, 1+13/16)--(2+74/81,1+13/16);
\draw (2+79/81, 1+15/16)--(2+80/81,1+15/16);
\draw (3,2) --(6,2);
\def\xc{2};
\def\yc{1};
\draw 
({\xc+1},\yc+1) -- ({\xc+2},\yc+1)
({\xc+1/3}, \yc+1/2)--({\xc+2/3},\yc+1/2)
({\xc+1/9}, \yc+1/4)--({\xc+2/9},\yc+1/4)
({\xc+7/9}, \yc+3/4)--({\xc+8/9},\yc+3/4)
({\xc+1/27}, \yc+1/8)--({\xc+2/27},\yc+1/8)
({\xc+7/27}, \yc+3/8)--({\xc+8/27},\yc+3/8)
({\xc+19/27}, \yc+5/8)--({\xc+20/27},\yc+5/8)
({\xc+25/27}, \yc+7/8)--({\xc+26/27},\yc+7/8)
({\xc+1/81}, \yc+1/16)--({\xc+2/81},\yc+1/16)
({\xc+7/81}, \yc+3/16)--({\xc+8/81},\yc+3/16)
({\xc+19/81}, \yc+5/16)--({\xc+20/81},\yc+5/16)
({\xc+25/81}, \yc+7/16)--({\xc+26/81},\yc+7/16)
({\xc+55/81}, \yc+9/16)--({\xc+56/81},\yc+9/16)
({\xc+61/81}, \yc+11/16)--({\xc+62/81},\yc+11/16)
({\xc+73/81}, \yc+13/16)--({\xc+74/81},\yc+13/16)
({\xc+79/81}, \yc+15/16)--({\xc+80/81},\yc+15/16);

\def\xc{6};
\def\yc{2};
\draw 
({\xc+1},\yc+1) -- ({\xc+2},\yc+1)
({\xc+1/3}, \yc+1/2)--({\xc+2/3},\yc+1/2)
({\xc+1/9}, \yc+1/4)--({\xc+2/9},\yc+1/4)
({\xc+7/9}, \yc+3/4)--({\xc+8/9},\yc+3/4)
({\xc+1/27}, \yc+1/8)--({\xc+2/27},\yc+1/8)
({\xc+7/27}, \yc+3/8)--({\xc+8/27},\yc+3/8)
({\xc+19/27}, \yc+5/8)--({\xc+20/27},\yc+5/8)
({\xc+25/27}, \yc+7/8)--({\xc+26/27},\yc+7/8)
({\xc+1/81}, \yc+1/16)--({\xc+2/81},\yc+1/16)
({\xc+7/81}, \yc+3/16)--({\xc+8/81},\yc+3/16)
({\xc+19/81}, \yc+5/16)--({\xc+20/81},\yc+5/16)
({\xc+25/81}, \yc+7/16)--({\xc+26/81},\yc+7/16)
({\xc+55/81}, \yc+9/16)--({\xc+56/81},\yc+9/16)
({\xc+61/81}, \yc+11/16)--({\xc+62/81},\yc+11/16)
({\xc+73/81}, \yc+13/16)--({\xc+74/81},\yc+13/16)
({\xc+79/81}, \yc+15/16)--({\xc+80/81},\yc+15/16);

\def\xc{8};
\def\yc{3};
\draw 
({\xc+1/3}, \yc+1/2)--({\xc+2/3},\yc+1/2)
({\xc+1/9}, \yc+1/4)--({\xc+2/9},\yc+1/4)
({\xc+7/9}, \yc+3/4)--({\xc+8/9},\yc+3/4)
({\xc+1/27}, \yc+1/8)--({\xc+2/27},\yc+1/8)
({\xc+7/27}, \yc+3/8)--({\xc+8/27},\yc+3/8)
({\xc+19/27}, \yc+5/8)--({\xc+20/27},\yc+5/8)
({\xc+25/27}, \yc+7/8)--({\xc+26/27},\yc+7/8)
({\xc+1/81}, \yc+1/16)--({\xc+2/81},\yc+1/16)
({\xc+7/81}, \yc+3/16)--({\xc+8/81},\yc+3/16)
({\xc+19/81}, \yc+5/16)--({\xc+20/81},\yc+5/16)
({\xc+25/81}, \yc+7/16)--({\xc+26/81},\yc+7/16)
({\xc+55/81}, \yc+9/16)--({\xc+56/81},\yc+9/16)
({\xc+61/81}, \yc+11/16)--({\xc+62/81},\yc+11/16)
({\xc+73/81}, \yc+13/16)--({\xc+74/81},\yc+13/16)
({\xc+79/81}, \yc+15/16)--({\xc+80/81},\yc+15/16);

\foreach \x in {1,2,...,9} \draw (\x,-2pt)--(\x,2pt) node [below, yshift=-2pt] {$\x$};
\foreach \x in {1,2,...,4} \draw (-2pt, \x)--(2pt,\x) node [left, xshift=-2pt] {$\x$};
\end{tikzpicture}
\caption{The graph of $G_e$}
\end{figure}

Let us denote by $MUCS$ the set of all unbounded, concave and slowly varying moduli.

\begin{theorem}\label{T:4}
	Let $(X,\rho)$ be a metric space, $(A_k) \subset CL(X)$ and $A \in CL(X)$. Then the following statements are equivalent:
	\begin{enumerate}
		\item [$(i)$] $(A_k)$ is Wijsman convergent to $A$;
		\item [$(ii)$] The equality 
		\begin{equation}\label{T:4e1}
		[WS^f] - \lim A_k = A
		\end{equation}
		holds for every unbounded modulus $f$;
		\item [$(iii)$] Equality~\eqref{T:4e1} holds for every $f \in MUCS$.
	\end{enumerate}
\end{theorem}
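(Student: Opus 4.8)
The plan is to establish the cycle of implications $(i) \Rightarrow (ii) \Rightarrow (iii) \Rightarrow (i)$. Since all three statements are conditions imposed pointwise over $x \in X$ on the numerical sequence $(d(x,A_k))$ and its (ordinary, respectively $f$-statistical) convergence to $d(x,A)$, I would fix $x \in X$ throughout and abbreviate $y_k = d(x,A_k)$ and $l = d(x,A)$, reducing everything to a statement about the real sequence $(y_k)$.

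For $(i) \Rightarrow (ii)$, I would show that ordinary convergence $y_k \to l$ already forces $f$-statistical convergence for \emph{every} unbounded modulus $f$. Indeed, if $y_k \to l$ then for each $\varepsilon > 0$ the set $\{k : |y_k - l| \geq \varepsilon\}$ is finite, and by the Remark following Definition~\ref{D:01} every finite set has zero $f$-density for each unbounded modulus $f$. Thus $d^f(\{k : |y_k - l| \geq \varepsilon\}) = 0$ for all $\varepsilon > 0$, which is exactly $f$-statistical convergence of $(y_k)$ to $l$; as this holds for each $x$, statement $(ii)$ follows. The implication $(ii) \Rightarrow (iii)$ is then immediate, since $MUCS$ is a subfamily of the class of unbounded moduli, so asserting~\eqref{T:4e1} for all unbounded moduli in particular asserts it for all $f \in MUCS$.

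The only substantive step is $(iii) \Rightarrow (i)$, which I would prove by contraposition using Lemma~\ref{L:1}. Suppose $(A_k)$ is not Wijsman convergent to $A$. Then there is a point $x$ for which $y_k = d(x,A_k)$ fails to converge to $l = d(x,A)$, so there exist $\varepsilon > 0$ and an infinite set $K = \{k \in \mathbb N : |y_k - l| \geq \varepsilon\}$. Applying Lemma~\ref{L:1} to this infinite $K$ yields an unbounded, concave, slowly varying modulus $f$, that is $f \in MUCS$, with $d^f(K) = 1$. In particular $d^f(K) \neq 0$, so $(y_k)$ is not $f$-statistically convergent to $l$, and hence $[WS^f]-\lim A_k = A$ fails for this $f \in MUCS$, contradicting $(iii)$. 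This proves $(iii) \Rightarrow (i)$ and closes the cycle.

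The main obstacle is concentrated entirely in the realization step of $(iii) \Rightarrow (i)$: one must convert the purely existential fact that $K$ is infinite into a \emph{concrete} modulus in $MUCS$ along which the $f$-density of $K$ equals $1$. This is precisely the content of Lemma~\ref{L:1}, whose proof supplies the explicit piecewise-affine $f$ built from a rapidly increasing subsequence $(n_k)$; once that lemma is invoked, the remaining verifications reduce to the two easy observations in $(i) \Rightarrow (ii)$ and the trivial inclusion in $(ii) \Rightarrow (iii)$.
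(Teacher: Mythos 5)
Your proposal is correct and follows essentially the same route as the paper: finiteness of the exceptional sets plus unboundedness of $f$ gives $(i)\Rightarrow(ii)$, the inclusion $MUCS\subseteq M$ gives $(ii)\Rightarrow(iii)$, and $(iii)\Rightarrow(i)$ is proved by contraposition via Lemma~\ref{L:1} applied to the infinite exceptional set. No gaps.
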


\begin{proof}
	$(i) \Rightarrow (ii)$ Let $(i)$ hold. Since $(A_{k})$ is Wijsman convergent to $A$, the set
	$$
	K_{x,\varepsilon}:=\{k \in \mathbb{N}\colon |d(x, A_k) - d(x,A)|\geq \varepsilon\}
	$$
	is finite for all $x \in X$ and $\varepsilon>0$. Let $f\colon [0,\infty) \to [0,\infty)$ be an unbounded modulus. The equality
	\begin{equation*}
	\lim_{n \to \infty}\frac{f(|K_{x,\varepsilon}|)}{f(n)} = 0,  
	\end{equation*}
	holds because $f$ is unbounded and increasing. Thus, $[WS^f]-\lim A_k = A$.
	
	$(ii) \Rightarrow (iii)$ It is trivial.
	
	$(iii) \Rightarrow (i)$ Let $(iii)$ hold. Suppose, $(A_k)$ is not Wijsman convergent to~$A$. Then the set $K_{x,\varepsilon}$ is infinite for some $x \in X$ and $\varepsilon > 0$. Now by Lemma \ref{L:1} there exists $f \in MUCS$ such that $d^f(K_{x,\varepsilon}) = 1$, which contradicts~\eqref{T:4e1}.
\end{proof}

\begin{remark}\label{R:3}
	The sequence $(A_k)$ in Example~\ref{R:03} is $f$-Wijsman statistically convergent with $f(x)=x$ but not Wijsman convergent.
\end{remark}

Theorem~\ref{T:4} us to formulate the following problem.

\begin{problem}\label{P2.9}
	Let $M$ be a set of all unbounded modulus. Describe the sets $S \subseteq M$ for which the conditions:
	\begin{itemize}
		\item $(A_k)$ is Wijsman convergent to $A$
	\end{itemize}
	and
	\begin{itemize}
		\item The equality $[WS^f]-\lim A_k = A$ holds for every $f \in S$
	\end{itemize}
	are equivalent for all metric spaces $(X, \rho)$, $(A_k) \subset CL(X)$ and $A \in CL(X)$.
\end{problem}

The following theorem is similar to Theorem~3.1 from~\cite{ab14}.

\begin{theorem}\label{T:5}
Let $(X,\rho)$ be a metric space, $f\colon [0,\infty)\to [0,\infty)$ be an unbounded modulus, $(A_i) \subset CL(X)$ and $A \in CL(X)$. Then 
$$
[WS^f]-\lim A_i = A
$$
holds if and only if, for each $x \in X$, there exists $K_{x} \subseteq \mathbb{N}$ such that 
$$
d^{f}(K_x)= 0 \quad\text{and}\quad \lim_{k \in \mathbb{N}-K_x} d(x, A_i)= d(x,A).
$$
\end{theorem}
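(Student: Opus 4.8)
The plan is to reduce the statement to a pointwise assertion about a single real sequence and then establish the $f$-density analogue of the classical characterization of statistical convergence via a thin exceptional set. Fix $x \in X$ and abbreviate $a_i := d(x,A_i)$ and $l := d(x,A)$. By Definition~\ref{D:1} and Definition~\ref{D:01}, the relation $[WS^f]-\lim A_i = A$ means precisely that for every $x$ the real sequence $(a_i)$ is $f$-statistically convergent to $l$, i.e. $d^f(E_\varepsilon) = 0$ for each $\varepsilon > 0$, where $E_\varepsilon := \{i \in \mathbb N \colon |a_i - l| \geq \varepsilon\}$. Thus it suffices to show, for the fixed $x$, that $(a_i)$ is $f$-statistically convergent to $l$ if and only if there is $K_x \subseteq \mathbb N$ with $d^f(K_x) = 0$ and $\lim_{i \notin K_x} a_i = l$; since $x$ is arbitrary and the $K_x$ are permitted to depend on $x$, this yields the theorem.

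For the direction $(\Leftarrow)$ I would argue directly. Given such a $K_x$ and an $\varepsilon > 0$, the convergence $\lim_{i \notin K_x} a_i = l$ supplies an $N$ with $E_\varepsilon \subseteq K_x \cup \{1,\dots,N\}$. Hence for every $n$ one has $|\{k \leq n \colon k \in E_\varepsilon\}| \leq |\{k \leq n \colon k \in K_x\}| + N$, and the monotonicity and subadditivity of $f$ give
\[
\frac{f(|\{k \leq n \colon k \in E_\varepsilon\}|)}{f(n)} \leq \frac{f(|\{k \leq n \colon k \in K_x\}|)}{f(n)} + \frac{f(N)}{f(n)}.
\]
Letting $n \to \infty$, the first term on the right tends to $d^f(K_x) = 0$ and the second to $0$ because $f$ is unbounded, so $d^f(E_\varepsilon) = 0$. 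This part is routine.

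The substance lies in $(\Rightarrow)$, and this is the step I expect to be the main obstacle, since one must manufacture a single exceptional set $K_x$ out of the whole family $\{E_{1/j}\}_{j \in \mathbb N}$. Writing $E_j := E_{1/j}$, the sets are nested, $E_1 \subseteq E_2 \subseteq \cdots$, and each satisfies $d^f(E_j) = 0$. I would choose a strictly increasing sequence $n_1 < n_2 < \cdots$ with $n_j \to \infty$ such that $f(|\{k \leq n \colon k \in E_j\}|)/f(n) < 1/j$ for all $n \geq n_j$, which is possible exactly because $d^f(E_j) = 0$. Then define $K_x$ by: $i \in K_x$ iff $i \in E_1$ when $i \leq n_1$, and $i \in K_x$ iff $i \in E_j$ when $n_j < i \leq n_{j+1}$. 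For $i \notin K_x$ with $i > n_{j_0}$ one gets $|a_i - l| < 1/j_0$, which gives $\lim_{i \notin K_x} a_i = l$. For the $f$-density, the nestedness forces $\{k \leq n \colon k \in K_x\} \subseteq \{k \leq n \colon k \in E_j\}$ whenever $n_j < n \leq n_{j+1}$, so by monotonicity of $f$ and the choice of $n_j$,
\[
\frac{f(|\{k \leq n \colon k \in K_x\}|)}{f(n)} \leq \frac{f(|\{k \leq n \colon k \in E_j\}|)}{f(n)} < \frac{1}{j};
\]
since $n \to \infty$ forces $j \to \infty$, this yields $d^f(K_x) = 0$. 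The delicate point is aligning the index of the block $(n_j, n_{j+1}]$ with the set $E_j$ whose density bound is already in force for $n > n_j$; matching these indices correctly is precisely what makes the single $\limsup$ estimate collapse to $0$ and drives the whole construction.
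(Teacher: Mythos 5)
Your proposal is correct, and both directions hold up under scrutiny: the reverse implication is the same routine subadditivity estimate the paper uses, and your forward construction — take the nested exceptional sets $E_j=\{i\colon |a_i-l|\geq 1/j\}$, pick $n_1<n_2<\cdots$ so that $f(|E_j\cap\{1,\dots,n\}|)/f(n)<1/j$ for all $n\geq n_j$, and define $K_x$ on the block $(n_j,n_{j+1}]$ to be $E_j$ restricted to that block — does yield $K_x\cap\{1,\dots,n\}\subseteq E_j\cap\{1,\dots,n\}$ for $n_j<n\leq n_{j+1}$ (since the earlier blocks contribute only elements of $E_m\subseteq E_j$ for $m\leq j$), so the density bound collapses as you claim. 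Where you differ from the paper: the paper's proof of the forward direction splits into three cases (all $B_j$ finite; some $B_{j_1}$ with $B_j-B_{j_1}$ finite for all $j$; the remaining case), and in the last case builds a subsequence $(j_k)$, forms the pairwise disjoint differences $B_k^*=B_{j_k}-B_{j_{k-1}}$, and sets $K_x=\bigcup_k\bigl(B_k^*-\{1,\dots,n_k\}\bigr)$, i.e.\ a union of tails of disjoint pieces rather than your union of blocks. Your version buys uniformity — the blockwise definition works in all cases at once, with no case analysis and no need for the disjointification — and the density estimate is arguably more transparent (the paper's corresponding step even carries an apparent index typo, ``$n\in[n_{k+1},n_k)$''). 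The paper's decomposition into disjoint $B_k^*$ gives the slightly sharper bookkeeping $|K^*_{x,\varepsilon}|\leq n_k$ for $\varepsilon\in[1/j_k,1/j_{k-1})$, but nothing in the theorem requires that extra precision. Both arguments rest on the same underlying idea of aligning a rapidly increasing sequence $(n_j)$ with the nested family of exceptional sets, so this is a cleaner variant of the same strategy rather than a fundamentally new one.
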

\begin{proof}
	For every $K \subseteq \mathbb N$ and $n \in \mathbb N$ we write $K(n)$ for the set 
	$$
	K \cap \{1, \ldots, n\}.
	$$
	Suppose
	\begin{equation}\label{T:5e1}
	[WS^f]-\lim A_i = A
	\end{equation}
	holds. For every $x \in X$ we must find a set $K_x \subseteq \mathbb N$ such that
	\begin{equation}\label{T:5e2}
	\lim_{i\in \mathbb N - K_x} d(x,A_i)=d(x,A)
	\end{equation}
	and
	\begin{equation}\label{T:5e3}
	\lim_{n \to \infty} \frac{f(|K_x(n)|)}{f(n)} = 0
	\end{equation}
	holds. 
	
	Let $x \in X$. For every $j \in \mathbb N$ define the set $B_j \subseteq \mathbb N$ by the rule:
	\begin{equation}\label{T:5e4}
	(i \in B_j) \Leftrightarrow \left(|d(x, A_i)-d(x,A)|\geq \frac{1}{j}\right).
	\end{equation}
	It is clear that $B_{j_1}\subseteq B_{j_2}$ holds whenever $j_2 \geq j_1$. If all $B_j$ are finite, then~\eqref{T:5e2} and~\eqref{T:5e3} are valid with $K_x = \varnothing$. Suppose $B_j$ are infinite for some $j \in \mathbb N$. If there is $B_{j_1}$ satisfying the condition 
	\begin{itemize}
		\item $B_j - B_{j_1}$ is finite for every $j \in \mathbb N$,
	\end{itemize}
	then~\eqref{T:5e2} and~\eqref{T:5e3} follows from~\eqref{T:5e1} with $K_x = B_{j_1}$. (Note that~\eqref{T:5e3} follows from~\eqref{T:5e1}.)
	
	Let us consider the case when, for every $B_j$, there is $l$ such that $B_{l+j}-B_j$ is infinite. Define a sequence $(j_k)\subseteq \mathbb N$ recursively by the rule:
	\begin{itemize}
		\item if $k=1$, then $j_1$ is the smallest $j$ for which $B_j$ is infinite,
		\item if $k\geq 2$, then $j_k$ is the smallest $j$ with infinite $B_j - B_{j_{k-1}}$.
	\end{itemize}
	
	Write $B_1^*:=B_{j_1}$ and, for $k\geq 2$, $B_k^*:=B_{j_k} - B_{j_{k-1}}$. It follows from~\eqref{T:5e4} that 
	\begin{equation}\label{T:5e5}
	(i \in B_1^*) \Leftrightarrow \left(|d(x, A_i)-d(x,A)|\geq \frac{1}{j_1}\right)
	\end{equation}
	and, for $k \geq 2$,
	\begin{equation}\label{T:5e6}
	(i \in B_k^*) \Leftrightarrow \left(\frac{1}{j_{k}} \leq |d(x, A_i)-d(x,A)|< \frac{1}{j_{k-1}}\right).
	\end{equation}
	It is easily seen that $B_{k_1}^*$ and $B_{k_2}^*$ are disjoint for all distinct $k_1$, $k_2 \in \mathbb N$. 
	
	Let $(n_k) \subseteq\mathbb N$ be a infinite strictly increasing sequence. Write
	\begin{equation}\label{T:5e7}
	B^* :=\bigcup_{k=1}^{\infty} (B_k^* - \{1, \ldots, n_k\}).
	\end{equation}
	
	We claim that~\eqref{T:5e2} holds with $K_x = B^*$. To prove~\eqref{T:5e2} it is suffices to show that the set
	\begin{equation}\label{T:5e8}
	K_{x,\varepsilon}^* := \{i\in (\mathbb N-B^*)\colon |d(x,A_i)-d(x,A)|\geq \varepsilon\}
	\end{equation}
	is finite for every $\varepsilon >0$. If $\varepsilon >0$, then we have either 
	\begin{equation}\label{T:5e9}
	\varepsilon \geq \frac{1}{j_1}
	\end{equation}
	or there is $k\geq 2$ such that 
	\begin{equation}\label{T:5e10}
	\frac{1}{j_{k-1}} > \varepsilon \geq \frac{1}{j_k}.
	\end{equation}
	Let $\varepsilon \geq \frac{1}{j_1}$ and let $i \in K_{x,\varepsilon}^*$. Then $i\in (\mathbb N-B^*)$ and
	\begin{equation}\label{T:5e11}
	|d(x,A_i)-d(x,A)|\geq \frac{1}{j_1}
	\end{equation}
	hold. Since 
	$$
	\mathbb N-B^* = \bigcap_{k=1}^{\infty} (\{1,\ldots, n_k\} \cup (\mathbb N-B_k^*)),
	$$
	the condition $i \in \mathbb N-B^*$ implies
	$$
	i \in \{1,\ldots, n_1\} \text{ or } i \in (\mathbb N-B_1^*).
	$$
	If $i \in (\mathbb N-B_1^*)$, then using~\eqref{T:5e5} we obtain 
	$$
	|d(x,A_i)-d(x,A)|< \frac{1}{j_1},
	$$
	which contradicts~\eqref{T:5e11}. Hence $i \in \{1,\ldots, n_1\}$ holds. Thus if $\varepsilon \geq \frac{1}{j_1}$, then $K_{x,\varepsilon}^*$ is finite with $|K_{x,\varepsilon}^*| \leq n_1$. Similarly if
	$$
	\frac{1}{j_{k-1}} > \varepsilon \geq \frac{1}{j_k} \text{ with } k \geq 2,
	$$
	then, using~\eqref{T:5e6} instead of~\eqref{T:5e5}, we can prove the inequality
	$$
	|K_{x,\varepsilon}^*| \leq n_k.
	$$
	Limit relation~\eqref{T:5e2} follows.
	
	Now we prove that there exists an increasing infinite sequence $(n_k) \subseteq \mathbb N$ such that~\eqref{T:5e3} holds for $K_x=B^*$ with $B^*$ defined by~\eqref{T:5e7}. Equality~\eqref{T:5e1} implies that $d^f(B_j)=0$ holds for every $j \in \mathbb N$. Hence for given $\varepsilon_1>0$ there is $n_1 \in \mathbb N$ such that 
	$$
	\frac{f(\left|B_{j_1}(n)\right|)}{f(n)} \leq \varepsilon_1
	$$
	is valid for every $n \geq n_1$. Let $0< \varepsilon_2 \leq \frac{1}{2}\varepsilon_1$. Using the equality $d^f(B_{j_2})=0$ we can find $n_2 > n_1$ such that
	$$
	\frac{f(\left|B_{j_2}(n)\right|)}{f(n)} \leq \varepsilon_2
	$$
	for all $n \geq n_2$. By induction on $k$ we can find $n_k > n_{k-1}$ which satisfies
	$$
	\frac{f(\left|B_{j_k}(n)\right|)}{f(n)} \leq \frac{1}{2}\varepsilon_{k-1} \leq \left(\frac{1}{2}\right)^{k-1}\varepsilon_{1}
	$$
	for all $n \geq n_k$. It follows~\eqref{T:5e7}, that, for every $k \in\mathbb N$, the inclusion 
	$$
	B^*(n) \subseteq B_{j_k} (n)
	$$
	holds if $n \in [n_{k+1}, n_k)$. Hence we have 
	$$
	\frac{f(\left|B^*(n)\right|)}{f(n)} \leq \left(\frac{1}{2}\right)^{k-1}\varepsilon_{1}
	$$
	if $n \in [n_{k+1}, n_k)$, $k \in \mathbb N$. The equality 
	$$
	\lim_{n\to\infty} \frac{f(\left|B^*(n)\right|)}{f(n)} = 0
	$$
	follows.
	
	Assume now that, for every $x \in X$, there is $K_x \subset \mathbb N$ such that 
	$$
	d^f(K_x)=0 \text{ and } \lim_{i \in \mathbb N-K_x} d(x, A_i) = d(x,A).
	$$
	Let $x \in X$ and $\varepsilon >0$. Then there is $i_0 \in \mathbb N-K_x$ such that 
	$$
	\left|d(x,A_i) - d(x,A)\right| \leq \varepsilon
	$$
	for all $i \in (\mathbb N-K_x)-\{1, \ldots, i_0\}$. Hence 
	$$
	\{i \in \mathbb N \colon \left|d(x,A_i) - d(x,A)\right| > \varepsilon\} \subseteq K_x \cup \{1, \ldots, i_0\}.
	$$
	Equality $d^f(K_x)=0$ implies $d^f(K_x \cup \{1, \ldots, i_0\})=0$. The limit relation 
	$$
	[WS^f]-\lim A_i = A
	$$
	follows.
\end{proof} 

\section{Wijsman statistical convergence and Wijsman Ces\`{a}ro summability}

The following example shows that Wijsman statistical convergence does not imply Wijsman Ces\`{a}ro summability.

\begin{example}\label{E:2}
Let $(X, \rho) = \mathbb{R}$ with the standard metric and let $(A_k)$ be defined as
\[
A_{k}=
\begin{cases}
\{k\}, &\text{if $k$ is a square,} \\
\{0\}, &\text{otherwise}.
\end{cases}
\]
This sequence is Wijsman statistically convergent to the set $\{0\}$ since
\begin{equation*}
\lim_{n \to \infty}\frac{1}{n}|\{k \leq n\colon |d(x,A_k) - d(x, \{0\})| \geq \varepsilon\}| =0
\end{equation*}
holds for all $x \in \mathbb R$ and $\varepsilon >0$. Now, we show that this sequence is not Wijsman Ces\`{a}ro summable. For the sequence $(\sigma_{k}(0))$ of Ces\`{a}ro means of order one of the sequence $(d(0, A_k))$ we have
\[
\sigma_{k}(0) =
\begin{cases}
\frac{(1^2+2^2+\cdots+n^2)}{n^2}, &\text{if } k = n^2,\ \text{for some}\ n \in \mathbb{N} \\
\frac{(1^2+2^2+\cdots+n^2)}{k}, & \text{if } n^2 < k < (n+1)^2,\ \text{for some}\ n \in \mathbb{N}.
\end{cases}
\]
The sequence $(\sigma_{k}(0))$ is not convergent because
$$
\lim_{n\to \infty} \frac{\sum_{1}^{n} k^2}{n^2} = \lim_{n\to \infty} \frac{1}{6}\, \frac{n(n+1)(2n+1)}{n^2} = \infty.
$$
\end{example}

We now give an example of sequence $(A_k) \subset CL(X)$ such that the sequence $(\sigma_k(x))$ of Ces\`{a}ro means of the sequence $(d(x, A_k))$ has a finite limit for every $x \in X$ but $(A_k)$ is not Wijsman Ces\`{a}ro summable to $A$ for any $A \in CL(X)$.

\begin{example}\label{E:4}
Let $(X, \rho) = \mathbb{R}$ with the standard metric and let $(A_k)$ be defined as
\[
A_{k}=
\begin{cases}
\{-1\}, &\text{if k is even}, \\
\{1\}, &\text{if k is odd}.
\end{cases}
\]
Let $x\in \mathbb R$. For the sequence $(\sigma_{k}(x))$ of Ces\`{a}ro means of order one of the sequence $(d(x,A_k))$ we have
\[
\sigma_{k}(x)=
\begin{cases}
|x|, & \text{if $k$ is even and } x \notin [-1,1], \\
1, & \text{if $k$ is even and } x \in [-1,1], \\
\left|x-\frac{1}{k}\right|, & \text{if $k$ is odd and } x \notin [-1,1], \\
1+\frac{x}{k}, & \text{if $k$ is odd and } x\in [-1,1].
\end{cases}
\]
Consequently
\begin{equation}\label{e2.8}
\lim_{k\to\infty} \sigma_{k}(x)=
\begin{cases}
|x|, & \text{if } x \notin [-1,1],\\
1, & \text{if } x \in [-1,1].
\end{cases}
\end{equation}
No we prove that $(A_k)$ is not Wijsman Ces\`{a}ro summable. Indeed, suppose contrary that there is $A \in CL(X)$ with 
\begin{equation}\label{e2.9}
\lim_{k\to\infty} \sigma_{k}(x)= d(x,A)
\end{equation}
for every $x \in \mathbb R$. Since $A$ is non-empty, there is $x_0\in A$. Using~\eqref{e2.8} and \eqref{e2.9} we obtain
$$
0=d(x_0, A) = \lim_{k\to\infty} \sigma_{k}(x_0) \text{ and } \lim_{k\to\infty} \sigma_{k}(x_0) \geq 1.
$$
Thus $0\geq 1$ which is a contradiction.
\end{example}

\begin{remark}\label{R2.11}
	It seems to be intesting to find a criteria guaranteeing the  Wijsman Ces\`{a}ro summability of $(A_k) \subset CL(X)$ to some $A \in CL(X)$ if the sequence $\bigl(\sigma_k(x)\bigr)$ of Ces\`{a}ro means of $\bigl(d(x,A_k)\bigr)$ is Ces\`{a}ro summable for every $x \in X$.
\end{remark}

In the next theorem we show that the Wijsman statistical convergence implies the Wijsman Ces\`{a}ro summability in case of Wijsman bounded sequences.

\begin{theorem}\label{T:8}
Let $(X,\rho)$ be a metric space, let $A \in CL(X)$ and let $(A_k) \subset CL(X)$. If $(A_k)$ is Wijsman bounded and Wijsman statistically convergent to~$A$, then $(A_k)$ is Wijsman Ces\`{a}ro summable to $A$.
\end{theorem}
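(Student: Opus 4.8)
The plan is to reduce the claim, for each fixed $x \in X$, to the classical fact that a bounded real sequence which is statistically convergent to a limit $l$ is Ces\`{a}ro summable to $l$. Fix $x \in X$ and write $a_k := d(x, A_k)$ and $l := d(x, A)$. Since $(A_k)$ is Wijsman statistically convergent to $A$, the real sequence $(a_k)$ is statistically convergent to $l$; and since $(A_k)$ is Wijsman bounded and $l < \infty$, the triangle inequality shows that
$$
M := \sup_{k} |a_k - l| \leq \sup_k d(x,A_k) + d(x,A)
$$
is finite. It therefore suffices to prove $\frac{1}{n}\sum_{k=1}^{n} a_k \to l$, which, once $x$ is allowed to vary, establishes Wijsman Ces\`{a}ro summability of $(A_k)$ to $A$.

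First I would fix $\varepsilon > 0$ and set $K_\varepsilon(n) := \{k \leq n \colon |a_k - l| \geq \varepsilon\}$. By statistical convergence we have $\frac{|K_\varepsilon(n)|}{n} \to 0$. Using the triangle inequality,
$$
\left|\frac{1}{n}\sum_{k=1}^{n} a_k - l\right| \leq \frac{1}{n}\sum_{k=1}^{n} |a_k - l|,
$$
and then I would split the right-hand sum according to whether $k \in K_\varepsilon(n)$ or not. For the terms with $k \in K_\varepsilon(n)$ I use the uniform bound $|a_k - l| \leq M$, so their contribution is at most $M\,\frac{|K_\varepsilon(n)|}{n}$; for the remaining terms I use $|a_k - l| < \varepsilon$, so their contribution is at most $\varepsilon$.

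Taking $\limsup_{n\to\infty}$ and invoking $\frac{|K_\varepsilon(n)|}{n}\to 0$ yields
$$
\limsup_{n\to\infty}\left|\frac{1}{n}\sum_{k=1}^{n} a_k - l\right| \leq \varepsilon.
$$
Since $\varepsilon > 0$ was arbitrary, this $\limsup$ equals $0$, i.e. $\frac{1}{n}\sum_{k=1}^{n} d(x,A_k) \to d(x,A)$. As $x \in X$ was arbitrary, $(A_k)$ is Wijsman Ces\`{a}ro summable to $A$. I note that the same splitting in fact bounds $\frac{1}{n}\sum_{k=1}^{n}|a_k - l|$, so the argument even delivers Wijsman strong Ces\`{a}ro summability.

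The argument is essentially routine; the one place where both hypotheses are genuinely needed is the splitting step. Wijsman statistical convergence makes the exceptional index set $K_\varepsilon(n)$ have vanishing natural density, while Wijsman boundedness supplies the finite uniform bound $M$ that keeps the contribution of that set negligible. Without boundedness the terms indexed by $K_\varepsilon(n)$ could grow without control (as in Example~\ref{E:2}), so I expect the verification that $M < \infty$ together with the clean separation of the two sums to be the only real point of the proof.
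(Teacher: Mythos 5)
Your proof is correct and follows essentially the same route as the paper: fix $x$, split $\frac{1}{n}\sum_{k=1}^{n}|d(x,A_k)-d(x,A)|$ over the exceptional set $K_{x,\varepsilon}(n)$ (controlled by the uniform bound supplied by Wijsman boundedness) and its complement (controlled by $\varepsilon$), then take $\limsup$ and let $\varepsilon \to 0$. Your closing observation that the same estimate yields Wijsman strong Ces\`{a}ro summability is a correct small bonus not stated in the paper's version.
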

\begin{proof}
	Let $\varepsilon >0$, $x \in X$, and let $(A_k)$ be Wijsman bounded. For every $n \in \mathbb N$ define the sets $K_{x,\varepsilon}(n)$, $K_{x,\varepsilon}'(n)$ and $M_x$ as
	\begin{align*}
	K_{x,\varepsilon}(n)& := \{k \leq n\colon |d(x,A_k) - d(x,A)| \geq \varepsilon\},\\
	K_{x,\varepsilon}'(n)& := \{1,\ldots,n\} - K_{x,\varepsilon}(n) \text{ and }  M_x := \sup_{k}|d(x,A_k)|.
	\end{align*}
	Suppose $(A_k)$ is Wijsman statistically convergent to $A$. Then the limit relation
	$$
	\lim_{n \to \infty} \frac{|K_{x,\varepsilon}(n)|}{n} = 0,
	$$
	holds. Now we have
	\begin{multline*}
	\left|d(x,A) - \frac{1}{n}\sum_{k=1}^n d(x,A_k)\right| \leq\frac{1}{n}\sum_{k=1}^{n}|(d(x,A_k) - d(x,A))| \\
	=\frac{1}{n}\left(\sum_{k \in K_{x,\varepsilon}'(n)}|d(x,A_k) - d(x,A)| + \sum_{k\in K_{x,\varepsilon}(n)} |d(x,A_k) - d(x,A)|\right)\\
	\leq \frac{(n- \left|K_{x,\varepsilon}(n)\right|)\varepsilon}{n} + \frac{1}{n}\left|K_{x,\varepsilon}(n)\right|M_x \leq \varepsilon + M_x \frac{\left|K_{x,\varepsilon}(n)\right|}{n}.
	\end{multline*}
	It implies the inequality 
	$$
	\limsup_{n\to \infty} \left|d(x,A) - \frac{1}{n}\sum_{k=1}^n d(x, A_k)\right| \leq \varepsilon.
	$$
	Letting $\varepsilon$ to $0$ we obtain
	$$
	\lim_{n \to \infty}\frac{1}{n}\sum_{k=1}^{n}d(x,A_k)= d(x,A).
	$$
	Since $x$ is an arbitrary point of $X$, $(A_k)$ is Wijsman Ces\`{a}ro summable to~$A$.
\end{proof}

\begin{corollary}\label{T:9}
	Let $(X,\rho)$ be a bounded metric space, $A \in CL(X)$, $(A_k) \subset CL(X)$ and let $f\colon [0,\infty)\to [0,\infty)$ be an unbounded modulus. If 
	$$
	[WS^f]-\lim A_k = A,
	$$
	then $(A_k)$ is Wijsman Ces\`{a}ro summable to $A$.
\end{corollary}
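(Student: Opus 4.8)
The plan is to realize this corollary as an immediate consequence of the two main results already established in the excerpt, namely Theorem~\ref{T:2} and Theorem~\ref{T:8}. The hypothesis $[WS^f]-\lim A_k = A$ is exactly the hypothesis of Theorem~\ref{T:2}, so that theorem hands us, free of charge, that $(A_k)$ is Wijsman statistically convergent to $A$. Theorem~\ref{T:8} then converts Wijsman statistical convergence into Wijsman Ces\`{a}ro summability, provided one extra ingredient is available: the Wijsman boundedness of $(A_k)$. So the whole content of the proof reduces to verifying that Wijsman boundedness comes for free from the assumption that $(X,\rho)$ is a bounded metric space.

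First I would establish the Wijsman boundedness. Let $D:=\operatorname{diam} X=\sup\{\rho(x,y)\colon x,y\in X\}$, which is finite by hypothesis. Fix $x\in X$ and $k\in\mathbb N$. Since $A_k\in CL(X)$ is non-empty, pick any $y_0\in A_k$; then
\[
d(x,A_k)=\inf_{y\in A_k}\rho(x,y)\leq \rho(x,y_0)\leq D.
\]
As this bound is uniform in $k$, we get $\sup_k d(x,A_k)\leq D<\infty$ for every $x\in X$, which is precisely condition~\eqref{eq1.3}. Hence $(A_k)$ is Wijsman bounded.

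Next I would invoke the two theorems. By Theorem~\ref{T:2}, the hypothesis $[WS^f]-\lim A_k = A$ implies that $(A_k)$ is Wijsman statistically convergent to $A$. Combining this with the Wijsman boundedness just verified, the hypotheses of Theorem~\ref{T:8} are met, and that theorem yields that $(A_k)$ is Wijsman Ces\`{a}ro summable to $A$, which is the desired conclusion.

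I do not expect any genuine obstacle here, since the statement is a corollary that simply chains together the earlier results; the only point requiring a (one-line) argument is the reduction of metric boundedness to Wijsman boundedness via the triangle/diameter estimate above. One should note that the boundedness of $(X,\rho)$ is used only to guarantee Wijsman boundedness of $(A_k)$, and indeed cannot be dropped in general: Example~\ref{E:2} exhibits a Wijsman statistically convergent sequence in the unbounded space $\mathbb R$ that fails to be Wijsman Ces\`{a}ro summable, illustrating exactly why the boundedness hypothesis is needed to pass to Theorem~\ref{T:8}.
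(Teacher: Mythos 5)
Your proof is correct and follows exactly the paper's own argument: the paper likewise deduces the corollary from Theorem~\ref{T:2} and Theorem~\ref{T:8}, noting that in a bounded metric space every sequence of non-empty closed sets is Wijsman bounded. You merely spell out the diameter estimate that the paper leaves implicit.
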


It follows from Theorem~\ref{T:2} and Theorem~\ref{T:8} because in each bounded metric space every sequence of non-empty closed sets is Wijsman bounded.

\section{Wijsman strong Ces\`{a}ro summability with respect to a modulus}

The well-known space $w$ of strongly Ces\`{a}ro summable sequences is defined as:
\begin{equation*}
w := \left\{(x_k)\colon \lim_{n \to \infty}\frac{1}{n}\sum_{k=1}^{n}|x_{k} - l| =0, \text{ for some } l \in \mathbb R\right\}.
\end{equation*}
Maddox~\cite{im86} extended the strong Ces\`{a}ro summabllity to that of strong Ces\`{a}ro summabllity with respect to a modulus $f$ and studied the space
\begin{equation*}
w(f) := \left\{(x_k)\colon \lim_{n \to \infty}\frac{1}{n}\sum_{k=1}^{n}f(|x_{k}-l|)=0,\text{ for some
	$l \in \mathbb R$}\right\}.
\end{equation*}

In the year 2012, Nuray and Rhoades \cite{fr12} introduced the notion of Wijsman strong Ces\`{a}ro summability of sequences of sets and discussed its relation with Wijsman statistical convergence.

In this section, we introduce a new concept of Wijsman strong Ces\`{a}ro summability with respect to a modulus $f$. It is shown that, under certain conditions on $f$, Wijsman strong Ces\`{a}ro summability w.r.t. $f$ implies $f$-Wijsman statistical convergence and that the concepts of $f$-Wijsman statistical convergence and of Wijsman strong Ces\`{a}ro summability w.r.t. $f$ are equivalent for Wijsman bounded sequences.

\begin{definition}\label{D:3}
Let $(X,\rho)$ be a metric space and let $f\colon [0,\infty) \to [0,\infty)$ be a modulus. A sequence $ (A_{k}) \subset CL(X)$ is said to be Wijsman strongly Ces\`{a}ro summable to $A \in CL(X)$ with respect to $f$, if the equality
\begin{equation*}
\lim_{n \to \infty}\frac{1}{n}\sum_{k=1}^{n}f\left(|d(x,A_k) - d(x,A)| \right) =0
\end{equation*}
holds for each $x \in X$. 

We write 
$$
[Ww^{f}]-\lim A_k = A
$$
if $(A_k)$ is Wijsman strongly Ces\`{a}ro summable to $A$ w.r.t. $f$.
\end{definition}

\begin{remark}\label{R3.2}
For $f(x) = x$, the concept of Wijsman strong Ces\`{a}ro summability w.r.t. $f$
reduces to that of Wijsman strong Ces\`{a}ro summability.
\end{remark}

\begin{theorem}\label{T:10}
	Let $(X,\rho)$ be a metric space, $(A_k) \subset CL(X)$, $A \in CL(X)$ and let $f\colon [0,\infty)\to [0,\infty)$ be a modulus. If $(A_k)$ is Wijsman strongly Ces\`{a}ro summable to $A$, then 
	\begin{equation}\label{T:10e1}
	[Ww^f]-\lim A_k = A.
	\end{equation}
\end{theorem}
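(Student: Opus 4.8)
The plan is to fix an arbitrary point $x \in X$ and to work entirely with the nonnegative numerical sequence $t_k := |d(x,A_k) - d(x,A)|$. By hypothesis $(A_k)$ is Wijsman strongly Ces\`aro summable to $A$, which means precisely that $\frac{1}{n}\sum_{k=1}^n t_k \to 0$ as $n \to \infty$. The goal is then to deduce $\frac{1}{n}\sum_{k=1}^n f(t_k) \to 0$; since $x$ is arbitrary, this is exactly~\eqref{T:10e1}. So the whole theorem reduces to a property of the modulus $f$ applied to a nonnegative sequence whose Ces\`aro average tends to zero.

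First I would fix $\varepsilon > 0$ and invoke the continuity of $f$ together with $f(0)=0$ (conditions $(iv)$ and $(i)$) to produce $\delta > 0$ such that $f(t) \leq \varepsilon$ whenever $0 \leq t \leq \delta$. The key technical step, which I expect to be the main obstacle, is a linear majorant for $f$ away from the origin. Using that $f$ is subadditive (condition $(ii)$) one has $f(m\delta) \leq m\,f(\delta)$ for every $m \in \mathbb N$, and since $f$ is increasing (condition $(iii)$), for $t > \delta$ we may take $m = \lceil t/\delta \rceil$ to obtain
\[
f(t) \leq f(m\delta) \leq m\,f(\delta) \leq \left(\frac{t}{\delta}+1\right) f(\delta) \leq \frac{2f(\delta)}{\delta}\, t,
\]
where the last inequality uses $t/\delta > 1$. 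Establishing this chain carefully (the monotonicity bound $t \leq m\delta$, the iterated subadditivity, and the elementary estimate $\lceil t/\delta\rceil < 2t/\delta$ for $t>\delta$) is the heart of the argument.

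With the majorant in hand, I would split the Ces\`aro average according to the size of $t_k$:
\[
\frac{1}{n}\sum_{k=1}^n f(t_k) = \frac{1}{n}\sum_{\substack{k\leq n\\ t_k\leq\delta}} f(t_k) + \frac{1}{n}\sum_{\substack{k\leq n\\ t_k>\delta}} f(t_k) \leq \varepsilon + \frac{2f(\delta)}{\delta}\cdot\frac{1}{n}\sum_{k=1}^n t_k,
\]
where the first sum is controlled termwise by the choice of $\delta$, and the second is controlled by the linear majorant (extending the summation over all $k\leq n$ only enlarges it). Taking $\limsup_{n\to\infty}$ and using $\frac{1}{n}\sum_{k=1}^n t_k \to 0$ yields $\limsup_{n\to\infty} \frac{1}{n}\sum_{k=1}^n f(t_k) \leq \varepsilon$. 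Finally, letting $\varepsilon \to 0$ forces the limit to be $0$, which completes the argument for this $x$ and, $x$ being arbitrary, establishes~\eqref{T:10e1}. Note that this proof uses only the general modulus axioms and neither unboundedness nor any growth condition on $f$, in accordance with the hypotheses of the statement.
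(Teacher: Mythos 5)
Your proof is correct and follows essentially the same route as the paper: split the Ces\`aro sum at the threshold $\delta$ given by continuity of $f$ at $0$, bound the small terms by $\varepsilon$, and bound the large terms by a linear majorant $f(t)\leq C_\delta t$ obtained from monotonicity and iterated subadditivity via the ceiling of $t/\delta$. The only cosmetic difference is that the paper's majorant carries the constant $2f(1)\delta^{-1}$ (using $\delta<1$) while yours carries $2f(\delta)\delta^{-1}$; both are valid.
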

\begin{proof}
Suppose that
\begin{equation}\label{T:10e2}
\lim_{n \to \infty}\frac{1}{n}\sum_{k=1}^{n}|d(x,A_k) - d(x,A)|=0
\end{equation}
holds for each $x \in X$. Let $\varepsilon > 0$ and choose $\delta \in (0,1)$ such that $f(t) < \varepsilon$ for $t \in [0, \delta]$. Consider 
$$
\sum_{k=1}^{n} f(|d(x,A_k) - d(x,A)|) = \sum_{1} + \sum_{2},
$$
where the first summation is over the set $\{k\leq n\colon |d(x,A_k) - d(x,A)| \leq \delta\}$
and the second is over $\{k\leq n\colon |d(x,A_k) - d(x,A)| > \delta\}$. Then $\sum_{1} \leq n \varepsilon$. To estimate $\sum_{2}$ we use the inequality
\begin{equation*}
\bigl|d(x,A_k) - d(x,A)\bigr| < \frac{\bigl|d(x,A_k) - d(x,A)\bigr|}{\delta} \leq 
\left\lceil|d(x,A_k) - d(x,A)|\delta^{-1}\right\rceil,
\end{equation*}
where $\lceil\cdot\rceil$ is the ceiling function. The modulus functions are increasing and subadditive. Hence
\begin{multline*}
f(\left|d(x,A_k) - d(x,A)\right|) \leq f(1)\left\lceil\left|d(x,A_k) - d(x,A)\right| \delta^{-1}\right\rceil \\
\leq 2f(1) \left|d(x,A_k) - d(x,A)\right|\delta^{-1}
\end{multline*}
holds whenever $|d(x,A_k) - d(x,A)| > \delta$. Thus we have
$$
\sum_{2}\leq 2 f(1) \delta^{-1} \sum_{k=1}^{n} \left|d(x,A_k) - d(x,A)\right|,
$$
which together with $\sum_{1} \leq n\varepsilon$ yields
\[
\frac{1}{n}\sum_{k=1}^{n} f\bigl(\left|d(x,A_k) - d(x,A)\right|\bigr) \leq \varepsilon + 2\, f(1)\, \delta^{-1}\, \frac{1}{n} \sum_{k=1}^{n} |d(x,A_k) - d(x,A)|.
\]
Now using~\eqref{T:10e2} we obtain
$$
\limsup_{n\to \infty} \frac{1}{n}\sum_{k=1}^{n} f\bigl(|d(x,A_k) - d(x,A)|\bigr) \leq \varepsilon.
$$
Equality~\eqref{T:10e1} follows by letting $\varepsilon$ to $0$.
\end{proof}

The next example shows that~\eqref{T:10e1} does not imply that $(A_k)$ is Wijsman strongly Ces\`{a}ro summable to $A$.

\begin{example}\label{E:3}
	Let $(X, \rho)=[0,\infty)$ with the standard metric and let $f(x) = \log(1+x)$. Let us consider a sequence $(A_k)$ defined by
	\[
	A_{k}= \begin{cases}
	\{k\}, &\text{if } k \in \{2^r\colon  r \in \mathbb N\},\\
	\{0\}, & \text{otherwise}.
	\end{cases}
	\]
	Then, for every $x \in [0,\infty)$, we have
\begin{equation}\label{eq3.2}
d(x,A_k)= \begin{cases}
|x-k|, & \text{if } k \in \{2^r\colon  r \in \mathbb N\},\\
x, & \text{otherwise}.
\end{cases}
\end{equation}
For any numerical sequence $(x_i) \subset [0,\infty)$, the limit relation 
$$
\lim_{n\to \infty} \frac{1}{n} \sum_{i=1}^n	f(x_i)=0
$$
holds if and only if 
$$
\lim_{r\to \infty} \frac{1}{2^r}\sum_{i=2^r}^{2^{r+1}-1}	f(x_i)=0.
$$
(See Maddox~\cite[p.~523]{im87}). Hence
$$
[Ww^f]-\lim A_k = \{0\}
$$
holds if and only if we have
\begin{equation}\label{eq3.3}
\lim_{r\to \infty} \frac{1}{2^r}\sum_{k=2^r}^{2^{r+1}-1} \log\Bigl(1+\bigl|d(x,A_k)-d(x,\{0\})\bigr|\Bigr)=0
\end{equation}
for every $x\in [0,\infty)$. Using~\eqref{eq3.2} we see that 
$$
\sum_{k=2^r}^{2^{r+1}-1} \log\Bigl(1+\bigl|d(x,A_k)-d(x,\{0\})\bigr|\Bigr) = \log\Bigl(1+\bigl|\left|x-2^r\right|-x\bigr|\Bigr).
$$
For sufficiently large $r$ we have
$$
1+ \bigl|\left|x-2^r\right|-x\bigr| = 2^r-2x+1.
$$
Consequently the left-hand of~\eqref{eq3.3} is equal to 
$$
\lim_{r\to \infty} \frac{1}{2^r}\log(2^r-2x+1).
$$
The last limit is $0$. Thus $(A_k)$ is Wijsman strongly Ces\`{a}ro summable to $\{0\}$ w.r.t $f$. Now, using~\eqref{eq3.2} we obtain
$$
\frac{1}{2^r}\sum_{k=2^r}^{2^{r+1}-1} \bigl|d(x,A_k) - d(x, \{0\})\bigr| = \frac{2^r-2x}{2^r}
$$
for sufficiently large $r$. Thus
$$
\lim_{r\to \infty} \frac{1}{2^r}\sum_{k=2^r}^{2^{r+1}-1} \bigl|d(x,A_k) - d(x, \{0\})\bigr| =1,
$$
which implies that $(A_k)$ is not Wijsman strongly Ces\`{a}ro summable to $\{0\}$.
\end{example}

The following lemma was proved by Maddox in~\cite{im87}.

\begin{Lemma}\label{L:3.5}
	Let $f\colon [0, \infty) \to [0, \infty)$ be a modulus. Then there is a finite $\lim_{t\to \infty} \frac{f(t)}{t}$ and the equality 
	\begin{equation}\label{L:3.5e1}
	\lim_{t\to \infty} \frac{f(t)}{t} = \inf\{t^{-1}f(t)\colon t \in (0, \infty)\}
	\end{equation}
	holds.
\end{Lemma}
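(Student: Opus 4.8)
The plan is to identify the limit with the infimum directly, via the classical Fekete-type argument for subadditive functions. Set $L := \inf\{t^{-1}f(t)\colon t \in (0,\infty)\}$. First I would observe that $L$ is finite: since $f$ takes values in $[0,\infty)$ we have $L \geq 0$, and the choice $t=1$ gives $L \leq f(1) < \infty$. It therefore suffices to prove $\lim_{t\to\infty} t^{-1}f(t) = L$. The lower estimate is immediate, because $t^{-1}f(t) \geq L$ for every $t>0$ by definition of the infimum, whence $\liminf_{t\to\infty} t^{-1}f(t) \geq L$.

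The substance of the proof is the reverse inequality $\limsup_{t\to\infty} t^{-1}f(t) \leq L$. Fix $\varepsilon>0$ and choose $t_0>0$ with $t_0^{-1}f(t_0) < L+\varepsilon$. For arbitrary $t>t_0$ I would use division with remainder $t = n t_0 + r$, where $n = \lfloor t/t_0\rfloor \in \mathbb{N}$ and $r\in[0,t_0)$. A routine induction on $n$ from the subadditivity property $(ii)$ gives $f(n t_0) \leq n f(t_0)$, and one further application yields $f(t) \leq n f(t_0) + f(r)$. Since $f$ is increasing (property $(iii)$) and $0\leq r < t_0$, the remainder term satisfies $f(r) \leq f(t_0)$. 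Dividing by $t$ and using $n t_0 \leq t$, i.e. $n/t \leq 1/t_0$, I obtain
\[
\frac{f(t)}{t} \leq \frac{n f(t_0)}{t} + \frac{f(r)}{t} \leq \frac{f(t_0)}{t_0} + \frac{f(t_0)}{t}.
\]
Letting $t\to\infty$ kills the last term, so $\limsup_{t\to\infty} t^{-1}f(t) \leq t_0^{-1}f(t_0) < L+\varepsilon$. As $\varepsilon>0$ was arbitrary, $\limsup_{t\to\infty} t^{-1}f(t) \leq L$, and combining with the lower estimate shows the limit exists and equals $L$, which is exactly~\eqref{L:3.5e1}.

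The only delicate points are the two elementary facts feeding into the displayed chain: the bound $f(n t_0)\leq n f(t_0)$, which is the induction mentioned above, and the control $f(r)\leq f(t_0)$ of the remainder, which makes $f(r)/t$ negligible. Neither is a genuine obstacle, so the argument is essentially a direct verification; it is worth noting that continuity of $f$ is not used at all, only monotonicity and subadditivity.
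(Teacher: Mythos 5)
Your proof is correct and follows essentially the same route as the paper: both establish the trivial lower bound from the definition of the infimum and then prove $\limsup_{t\to\infty} t^{-1}f(t)\leq\inf_{t>0} t^{-1}f(t)$ by the Fekete-type argument of choosing $t_0$ nearly attaining the infimum, decomposing $t$ by division with remainder, and invoking subadditivity and monotonicity. If anything, your bound $f(r)\leq f(t_0)$ for the remainder is slightly cleaner than the paper's estimate of that term, but the argument is the same.
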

\begin{proof}
	Write
	\begin{equation}\label{L:3.5e2}
	\beta := \inf\{t^{-1}f(t)\colon t \in (0, \infty)\}.
	\end{equation}
	It suffices to show that 
	\begin{equation}\label{L:3.5e3}
	\limsup_{t\to \infty} \frac{f(t)}{t} \leq \beta.
	\end{equation}
	Let $\varepsilon>0$ and let $t_0 \in (0, \infty)$ such that 
	$$
	\beta \geq \frac{f(t_0)}{t_0}-\varepsilon.
	$$
	The last inequality is equivalent to 
	\begin{equation}\label{L:3.5e4}
	f(t_0) \leq t_0 (\beta+\varepsilon).
	\end{equation}
	For every $t \in (0,\infty)$ we have 
	\begin{equation}\label{L:3.5e5}
	t = t_0 \left\lfloor\frac{t}{t_0}\right\rfloor + \left(t-t_0 \left\lfloor\frac{t}{t_0} \right\rfloor\right) \leq \left(t_0\left\lfloor \frac{t}{t_0} \right\rfloor+1\right),
	\end{equation}
	where $\lfloor\cdot\rfloor$ is the floor function. Using the increase and subadditivity of $f$ and \eqref{L:3.5e4}--\eqref{L:3.5e5} we obtain 
	$$
	\frac{f(t)}{t} \leq \frac{f(t_0) \left\lfloor\frac{t}{t_0}\right\rfloor+f(1)}{t} \leq \frac{t_0(\beta+\varepsilon)\left\lfloor\frac{t}{t_0}\right\rfloor + f(1)}{t}
	$$
	for all sufficiently large $t$. Hence
	$$
	\limsup_{t\to \infty} \frac{f(t)}{t} \leq (\beta+\varepsilon) \limsup_{t\to \infty} \frac{t_0 \left\lfloor\frac{t}{t_0}\right\rfloor}{t} = \beta+\varepsilon.
	$$
	Inequality~\eqref{L:3.5e3} follows by letting $\varepsilon$ to $0$.
\end{proof}

\begin{theorem}\label{T:11}
	Let $(X,\rho)$ be a metric space, $A \in CL(X)$ and $(A_k) \subset CL(X)$. If $f\colon [0, \infty) \to [0, \infty)$ is a modulus such that
	\begin{equation}\label{T:11e1}
	\beta:=\lim_{t\to \infty} \frac{f(t)}{t} >0 \text{ and } [Ww^f] - \lim A_k =A,
	\end{equation}
	then $(A_k)$ is Wijsman strongly Ces\`{a}ro summable to $A$.
\end{theorem}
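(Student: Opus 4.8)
The plan is to reduce the entire statement to a single pointwise inequality coming from Lemma~\ref{L:3.5}. That lemma asserts not only that $\beta=\lim_{t\to\infty}t^{-1}f(t)$ exists and is finite, but that it equals $\inf\{t^{-1}f(t)\colon t\in(0,\infty)\}$. Consequently $t^{-1}f(t)\geq\beta$ for every $t>0$, which rearranges to
$$
f(t)\geq \beta t \qquad \text{for all } t\in[0,\infty),
$$
the case $t=0$ holding with equality since $f(0)=0$. This is the whole engine of the proof: under the hypothesis $\beta>0$, the modulus dominates a positive multiple of the identity, so its Ces\`{a}ro averages control the ordinary ones.

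Next I would fix an arbitrary $x\in X$ and apply this inequality with $t=|d(x,A_k)-d(x,A)|$ for each $k$. Summing over $k=1,\ldots,n$ and dividing by $n$ gives
$$
\frac{\beta}{n}\sum_{k=1}^{n}\bigl|d(x,A_k)-d(x,A)\bigr|
\leq
\frac{1}{n}\sum_{k=1}^{n} f\bigl(|d(x,A_k)-d(x,A)|\bigr).
$$
Since $\beta>0$ I may divide through by $\beta$, obtaining that the ordinary average is bounded above by $\beta^{-1}$ times the $f$-average. By the hypothesis $[Ww^f]-\lim A_k=A$ the right-hand side tends to $0$ as $n\to\infty$; because all summands are nonnegative, a squeeze then forces
$$
\lim_{n\to\infty}\frac{1}{n}\sum_{k=1}^{n}\bigl|d(x,A_k)-d(x,A)\bigr|=0.
$$
As $x\in X$ was arbitrary, this is precisely the assertion that $(A_k)$ is Wijsman strongly Ces\`{a}ro summable to $A$.

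I do not anticipate a genuine obstacle here: the argument is a direct comparison, and the only nontrivial input is the infimum characterization of $\beta$ in Lemma~\ref{L:3.5}, which is exactly what converts the limit $\beta=\lim_{t\to\infty}t^{-1}f(t)$ into a uniform lower bound $f(t)\geq\beta t$ valid for \emph{every} $t$ rather than merely for large $t$. It is worth emphasizing that the hypothesis $\beta>0$ is indispensable and is precisely what fails in Example~\ref{E:3} (where $f(x)=\log(1+x)$ gives $\beta=0$), so the theorem provides the exact converse class to Theorem~\ref{T:10}.
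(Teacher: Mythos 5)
Your argument is correct and coincides with the paper's own proof: both invoke Lemma~\ref{L:3.5} to convert the limit $\beta$ into the uniform bound $f(t)\geq\beta t$, then divide by $\beta>0$ and compare the ordinary Ces\`{a}ro average with the $f$-average. No gaps; the remark about Example~\ref{E:3} is a nice sanity check but not needed.
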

\begin{proof}
	Let a modulus $f$ satisfy condition~\eqref{T:11e1}. By Lemma~\ref{L:3.5} we have 
	$$
	\beta = \inf\{t^{-1}f(t)\colon t >0\}.
	$$
	Consequently 
	\begin{equation}\label{T:11e2}
	f(t)\geq \beta t
	\end{equation}
	holds for every $t \geq 0$. It follows from~\eqref{T:11e2} that
	\[
	\frac{1}{n}\sum_{k=1}^{n} |d(x,A_k) - d(x,A)| \leq 
	\beta^{-1} \frac{1}{n}\sum_{k=1}^{n} f(|\,d(x,A_k) - d(x,A)|),
	\]
	holds for every $x \in X$. Using the second term of~\eqref{T:11e1} we see that $(A_k)$ is Wijsman strongly Ces\`{a}ro summable to $A$.
\end{proof}

\begin{theorem}\label{T:12}
Let $(X,\rho)$ be a metric space, $A \in CL(X)$ and $(A_k)\subset CL(X)$. Suppose that $f\colon [0,\infty) \to [0,\infty)$ is an unbounded modulus which satisfies the inequalities
\begin{equation}\label{T:12e1}
\lim_{t \to \infty}\frac{f(t)}{t}>0 \text{ and }f(xy)\geq c\,f(x)\,f(y)
\end{equation}
with some $c \in (0, \infty)$ for all $x$, $y\in [0,\infty)$. Then the following statements hold:
\begin{enumerate}
\item[$(i)$] If $(A_k)$ is Wijsman strongly Ces\`{a}ro summable to $A$ w.r.t. $f$, then $(A_k)$ is $f$-Wijsman statistically convergent to $A$;
\item [$(ii)$] If $(A_k)$ is Wijsman bounded and $f$-Wijsman statistically convergent to~$A$, then $(A_k)$ is Wijsman strongly Ces\`{a}ro summable to $A$ w.r.t. $f$.
\end{enumerate}
\end{theorem}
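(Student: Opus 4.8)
The plan is to prove the two implications separately, each relying on a different one of the two hypotheses gathered in~\eqref{T:12e1}. Throughout I fix $x\in X$ and, for $\varepsilon>0$ and $n\in\mathbb N$, write $K_{x,\varepsilon}(n):=\{k\leq n\colon |d(x,A_k)-d(x,A)|\geq\varepsilon\}$, so that the conclusion $[WS^f]-\lim A_k=A$ is precisely the statement that $\lim_{n\to\infty} f(|K_{x,\varepsilon}(n)|)/f(n)=0$ for every $x$ and every $\varepsilon>0$. I shall use repeatedly that $f$ is increasing and subadditive, which gives $f(m)\leq m\,f(1)$ for $m\in\mathbb N$, and that by Lemma~\ref{L:3.5} the condition $\lim_{t\to\infty}f(t)/t=\beta>0$ yields the linear minorant $f(t)\geq\beta t$ for all $t\geq 0$.

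For part~$(i)$ I would first pass from Wijsman strong Ces\`{a}ro summability w.r.t.~$f$ to ordinary density. Since $|d(x,A_k)-d(x,A)|\geq\varepsilon$ forces $f(|d(x,A_k)-d(x,A)|)\geq f(\varepsilon)>0$, discarding all but the indices in $K_{x,\varepsilon}(n)$ gives $f(\varepsilon)\,|K_{x,\varepsilon}(n)|/n\leq \frac1n\sum_{k=1}^n f(|d(x,A_k)-d(x,A)|)$, and the right-hand side tends to $0$ by hypothesis; hence $|K_{x,\varepsilon}(n)|/n\to 0$. The decisive step is to upgrade this to $f$-density zero, and this is exactly where the super-multiplicativity $f(xy)\geq c\,f(x)f(y)$ enters. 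Putting $a_n:=|K_{x,\varepsilon}(n)|$ and factoring $n=(n/a_n)\cdot a_n$ for the indices with $a_n\geq 1$, this inequality gives $f(n)\geq c\,f(n/a_n)\,f(a_n)$, whence $f(a_n)/f(n)\leq 1/\bigl(c\,f(n/a_n)\bigr)$. As $a_n/n\to 0$ we have $n/a_n\to\infty$, so $f(n/a_n)\to\infty$ because $f$ is unbounded; the indices with $a_n=0$ contribute nothing. Therefore $f(|K_{x,\varepsilon}(n)|)/f(n)\to 0$, which is $[WS^f]-\lim A_k=A$.

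For part~$(ii)$ I would run the argument in the opposite direction, now invoking $\beta>0$ together with the Wijsman boundedness. The latter supplies a finite $M_x:=\sup_k d(x,A_k)$, so that $L_x:=M_x+d(x,A)$ bounds $|d(x,A_k)-d(x,A)|$ uniformly in $k$. From $f$-Wijsman statistical convergence I have $f(|K_{x,\varepsilon}(n)|)/f(n)\to 0$; combining $f(|K_{x,\varepsilon}(n)|)\geq\beta\,|K_{x,\varepsilon}(n)|$ with $f(n)\leq f(1)\,n$ yields $|K_{x,\varepsilon}(n)|/n\leq \frac{f(1)}{\beta}\,f(|K_{x,\varepsilon}(n)|)/f(n)\to 0$, so ordinary density zero is recovered. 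Splitting the Ces\`{a}ro sum over the indices $k\leq n$ with $|d(x,A_k)-d(x,A)|<\varepsilon$ and over $K_{x,\varepsilon}(n)$, and bounding the first block termwise by $f(\varepsilon)$ and the second by $f(L_x)$, I obtain $\frac1n\sum_{k=1}^n f(|d(x,A_k)-d(x,A)|)\leq f(\varepsilon)+f(L_x)\,|K_{x,\varepsilon}(n)|/n$, hence $\limsup_{n\to\infty}\frac1n\sum_{k=1}^n f(|d(x,A_k)-d(x,A)|)\leq f(\varepsilon)$. Letting $\varepsilon\to 0^+$ and using the continuity of $f$ with $f(0)=0$ gives $[Ww^f]-\lim A_k=A$.

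The main obstacle I anticipate is the mismatch between ordinary density and $f$-density: the two hypotheses in~\eqref{T:12e1} are genuinely not interchangeable, and each half of the theorem needs its own. Super-multiplicativity is what pushes ordinary density zero up to $f$-density zero in~$(i)$, while the linear minorant $f(t)\geq\beta t$ from Lemma~\ref{L:3.5} is what brings $f$-density zero down to ordinary density zero in~$(ii)$; correspondingly, Wijsman boundedness is needed only in~$(ii)$, in order to keep the exceptional block of the Ces\`{a}ro sum bounded. The remaining care is routine: handling the indices with $a_n=0$ and treating $n/a_n$ as a genuine real argument of $f$ in part~$(i)$.
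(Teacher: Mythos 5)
Your proof is correct, and part $(i)$ follows a genuinely different route from the paper's. The paper proves $(i)$ in one chain: superadditivity over sums ($\sum_k f(t_k)\geq f(\sum_k t_k)$), then $f\bigl(|K_{x,\varepsilon}(n)|\varepsilon\bigr)\geq c\,f\bigl(|K_{x,\varepsilon}(n)|\bigr)f(\varepsilon)$, and finally division by $n=\frac{n}{f(n)}\,f(n)$, so that the hypothesis $\lim_{t\to\infty}f(t)/t=\beta>0$ is invoked to absorb the factor $f(n)/n$. You instead first extract ordinary density zero by the termwise bound $f(\varepsilon)\,|K_{x,\varepsilon}(n)|\leq\sum_{k=1}^n f(|d(x,A_k)-d(x,A)|)$ and then apply super-multiplicativity to the factorization $f(n)\geq c\,f(n/a_n)\,f(a_n)$; this is a clean two-step argument whose payoff is that part $(i)$ is established without using $\beta>0$ at all, only super-multiplicativity and unboundedness --- a mild sharpening of the theorem as stated. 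In part $(ii)$ the two proofs coincide in the splitting of the Ces\`{a}ro sum; the only difference is that the paper obtains $|K_{x,\varepsilon}(n)|/n\to 0$ by citing Theorem~\ref{T:2} (which needs no hypothesis beyond unboundedness), while you rederive it from the linear minorant $f(t)\geq\beta t$ of Lemma~\ref{L:3.5} together with $f(n)\leq n f(1)$ --- equally valid, though it makes part $(ii)$ appear to depend on $\beta>0$ when in fact it does not.
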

\begin{proof}
	Let 
	$$
	K_{x,\varepsilon}(n):=\{k \leq n\colon |d(x,A_k)-d(x,A)|\geq \varepsilon\}
	$$
	for all $x \in X$, $\varepsilon \in (0,\infty)$ and $n \in \mathbb N$. 
	
$(i)$ Let $[Ww^f]-\lim A_k=A$. By subadditivity of moduli we have
$$
\sum_{k=1}^{n} f(|d(x,A_k)- d(x,A)|) \geq f\left(\sum_{k=1}^{n} |d(x,A_k)- d(x,A)|\right)
$$
for every $x \in X$. Using the second inequality from~\eqref{T:12e1} we obtain
\begin{equation*}
f\left(\sum_{k \in K_{x,\varepsilon}(n)} |d(x,A_k)- d(x,A)|\right)\geq f\bigl(\left|K_{x,\varepsilon}(n)\right|\varepsilon\bigr) \geq c f\bigl(\left|K_{x,\varepsilon}(n)\right|\bigr)f(\varepsilon).
\end{equation*}
Hence
\begin{equation}\label{T:12e3}
\frac{1}{n}\sum_{k=1}^{n}\,f(|\,d(x,A_k) - d(x,A)|) \geq c\left(\frac{\,f\left(\left|K_{x,\varepsilon}(n)\right|\right)}{f(n)}\right) \left(\frac{f(n)}{n}\right) \,f(\varepsilon).
\end{equation}
This inequality, the first inequality from~\eqref{T:12e1}, $[Ww^f]-\lim A_k=A$ and $\lim_{\varepsilon \to 0} f(\varepsilon)=0$ imply $[WS^f]-\lim A_k=A$.

$(ii)$ Let $(A_k)$ be Wijsman bounded and let $[WS^f]-\lim A_k=A$. Since $(A_k)$ is Wijsman bounded, we have 
\begin{equation}\label{T:12e2}
M_x :=\sup_{k}\left|d(x,A_k)| + d(x,A)\right|<\infty.
\end{equation}
For all $n \in \mathbb{N}$, $x\in X$ and $\varepsilon >0$, we write $K_{x,\varepsilon}'(n):= \{1,\ldots,n\} - K_{x,\varepsilon}(n)$. Now,
\begin{multline*}
\frac{1}{n}\sum_{k=1}^{n}\,f(|\,d(x,A_k) - d(x,A)|)\\
=\frac{1}{n}\sum_{k \in K_{x,\varepsilon}(n)} f(|d(x,A_k) - d(x,A)|) + \frac{1}{n}\sum_{k \in K_{x,\varepsilon}'(n)} f(|d(x,A_k) - d(x,A)|)\\
\leq \frac{\left|K_{x,\varepsilon}(n)\right|}{n} f(M_x)+ \frac{1}{n}n\,f(\varepsilon).
\end{multline*}
Letting $n \to \infty$ we get
\begin{align*}
\frac{1}{n}\sum_{k=1}^{n}\,f(|\,d(x,A_k) - d(x,A)|) \leq f(\varepsilon),
\end{align*}
in view of Theorem \ref{T:2} and \eqref{T:12e2}. Now the equality
$$
[Ww^f]-\lim A_k=A
$$
follows from $\lim_{\varepsilon \to 0} f(\varepsilon)=0$.
\end{proof}

\begin{remark}\label{R:6}
If we take $f(x) = x$ in Theorem \ref{T:12}, we obtain Theorem $6$ of Nuray and Rhoades $\cite{fr12}$.
\end{remark}

It seems to be interesting to find a solution of the following problem.

\begin{problem}\label{P3.9}
	Find characteristic properties of moduli $f$ for which the equalities $[WS^f] - \lim A_k = A$ and $[Ww^f] - \lim A_k = A$ are equivalent for all bounded metric spaces $(X, \rho)$, $(A_k) \subset CL(X)$ and $A \in CL(X)$.
\end{problem}

\medskip
\noindent\textbf{Acknowledgmets}. The research of the third author was supported by grant of the State Fund for Fundamental Research (project F71/20570) and partially supported by grant 0115U000136 of the Ministry Education and Science of Ukraine.

\begin{footnotesize}

\end{footnotesize}

\noindent
Vinod K. Bhardwaj\\
Department of Mathematics, Kurukshetra University,\\
Kurukshetra-$136119$, INDIA\\
email: \texttt{vinodk\_bhj@rediffmail.com}\\[.2cm]
Shweta Dhawan\\
Department of Mathematics, KVA DAV College for Women,\\
Karnal-$132001$, INDIA\\
email: \texttt{shwetadhawan\_dav@rediffmail.com}\\[.2cm]
Oleksiy A. Dovgoshey\\
Function Theory Department,\\
Institute of Applied Mathematics and Mechanics of NASU,\\
Dobrovolskogo str.~$1$, Slovyansk $84100$, UKRAINE\\
email: \texttt{oleksiy.dovgoshey@gmail.com}

\end{document}